\documentclass[pdflatex,sn-mathphys-ay, iicol]{sn-jnl}


\usepackage{graphicx}%
\usepackage{multirow}%
\usepackage{amsmath,amssymb,amsfonts}%
\usepackage{amsthm}%
\usepackage{mathrsfs}%
\usepackage[title]{appendix}%
\usepackage{xcolor}%
\usepackage{textcomp}%
\usepackage{manyfoot}%
\usepackage{booktabs}%
\usepackage{algorithm}%
\usepackage{algorithmicx}%
\usepackage{algpseudocode}%
\usepackage{listings}%
\usepackage{subcaption, mathtools, enumitem}
\usepackage[most]{tcolorbox}

\DeclareMathOperator*{\argmax}{arg\,max}
\definecolor{kthblue}{RGB}{25,84,166}
\definecolor{myred}{RGB}{194, 50, 72}
\definecolor{mygreen}{RGB}{0, 190, 151}


\theoremstyle{thmstyleone}%
\newtheorem{theorem}{Theorem}
\newtheorem{proposition}[theorem]{Proposition}%
\newtheorem{corollary}{Corollary}

\theoremstyle{thmstyletwo}%
\newtheorem{remark}{Remark}%

\theoremstyle{thmstylethree}%

\newtcbtheorem{tuning}{Tuning scheme}{coltitle=black,colback=white,colframe=gray!30,fonttitle=\bfseries, width=\columnwidth, boxrule=0.3mm, sharp corners}{tune}

\raggedbottom

\begin{document}

\title[Large deviation-based tuning schemes for MH algorithms]{\center{Large deviation-based tuning schemes \\for Metropolis-Hastings algorithms}}

\author*[1]{\fnm{Federica} \sur{Milinanni}}\email{fedmil@kth.se}



\affil*[1]{\orgdiv{Department of Mathematics}, \orgname{KTH Royal Institute of Technology}, \orgaddress{\street{Lindstedtsvägen 25}, \city{Stockholm}, \postcode{11428}, 
\country{Sweden}}}




\abstract{Markov chain Monte Carlo (MCMC) methods are one of the most popular classes of algorithms for sampling from a target probability distribution.
A rising trend in recent years consists in analyzing the convergence of MCMC algorithms using tools from the theory of large deviations. 
In (Milinanni \& Nyquist, 2024), a new framework based on this approach has been developed to study the convergence of empirical measures associated with algorithms of Metropolis-Hastings type, a broad and popular sub-class of MCMC methods.

The goal of this paper is to leverage these large deviation results to improve the efficiency of Metropolis-Hastings algorithms. Specifically, we use the large deviations rate function (a central object in large deviation theory) to quantify and characterize the algorithms' speed of convergence. We begin by extending the analysis from (Milinanni \& Nyquist, 2024), deriving alternative representations of the rate function. Building on this, we establish explicit upper and lower bounds, which we then use to design schemes to tune Metropolis-Hastings algorithms.}

\keywords{Large deviations, empirical measure, Markov chain Monte Carlo, Metropolis-Hastings, Independent Metropolis-Hastings, Tuning algorithms}



\maketitle
\section{Introduction}
Sampling from a target probability distribution $\pi$ is a fundamental task in many applied fields, including, e.g., computational biology, statistical mechanics, epidemiology and ecology \cite{andrieu2003introduction, asmussen2007stochastic, robert2004monte}. A setting where sampling is predominant is that of Bayesian statistics. In fact, a direct computation of the Bayesian posterior distribution in practical applications is typically computationally prohibitive, and a valid alternative to direct computation of the posterior consists in sampling from such probability distribution using sampling algorithms.

Markov chain Monte Carlo (MCMC) methods are a wide and popular class of sampling algorithms. These methods generate a Markov chain that has the target $\pi$ as invariant probability distribution \cite{robert2004monte}. One of the most common algorithms in this class is the Metropolis-Hastings (MH) algorithm, which is based on a mechanism of proposing a state for the next element of the chain, and then accepting or rejecting such proposal. Different choices of proposal mechanism lead to different algorithms of MH type \cite{metropolis1953equation, hastings1970monte}. Examples of algorithms that fall within the MH category are the Independent Metropolis-Hastings (IMH) algorithm, the Random Walk Metropolis (RWM) algorithm, the Metropolis-adjusted Langevin algorithm (MALA), and Hamiltonian Monte Carlo (HMC) \cite{Tie94, robert2004monte, mengersen1996rates, sherlock2010random, besag1994comments, roberts1998optimal, roberts1996exponential, duane1987hybrid}.

In order to design efficient sampling algorithms, it is of crucial importance to perform a theoretical analysis of the underlying Markov processes. In fact, in applied problems a blind use of off-the-shelf sampling algorithms can require excessively large computation times. A theoretical understanding of the convergence properties of MCMC methods helps us in designing algorithms that successfully perform the sampling task with a reasonable computational effort. The Monte Carlo literature already provides a variety of tools to perform convergence analysis of MCMC algorithms. Some of the more classical tools include: spectral gap, asymptotic variance, mixing times, and functional inequalities \cite{bedard2008optimal, rosenthal2003asymptotic, DHN00, franke2010behavior, frigessi1993convergence, hwang2005accelerating, ALP+22a, ALP+23a, PSW24}. 

In recent years, we have witnessed a growing trend toward using the theory of large deviations to analyze the speed of convergence of sampling algorithms. This approach is based on the seminal work by Donsker and Varadhan \cite{donsker1975asymptotic, DV75b, donsker1975asymptotic3}, and consists in deriving a \textit{large deviation principle} (LDP) for the \textit{empirical measures} associated with the algorithms' underlying Markov processes. More in detail, if an algorithm generates discrete time Markov chains $\{X_i\}_{i\in\mathbb{N}}$, as the algorithms of MH type, the corresponding empirical measure is the random probability measure defined as 
\begin{align*}
    L^n(dx)=\frac{1}{n}\sum_{i=0}^{n-1}\delta_{X_i}(dx).
\end{align*}
If the algorithm is well-designed, the sequence of empirical measures $\{L^n\}$ converges almost surely to the target $\pi$, in the weak topology. 
By saying that the sequence of empirical measures $\{L^n\}$ satisfies a large deviation principle with speed $n$ and some rate function $I$, we mean, roughly speaking (see Section~\ref{sec:ldp} for a formal definition), that the convergence $\lim_{n\to\infty}L^n=\pi$ is exponentially fast. In addition, the rate function $I$ provides important information on the corresponding speed of convergence. The fundamental concept is: \textit{a larger rate function indicates faster convergence of the sampling algorithm}.

Whereas more classical convergence tools usually describe the distribution of the $n$-th iterate of the Markov chain, $X_n$, this large deviations approach accounts for the information on the time averaging effect of the empirical measure. This distinction is crucial because, in practice, Monte Carlo estimates rely on the empirical measure $L_n$ rather than only on $X_n$. For example, we often approximate the integral $\int f(x)\pi(dx)$ with the empirical average $\frac{1}{n}\sum_{i=0}^{n-1}f(X_i)$. Notably, this empirical average corresponds to integrating the function $f$ with respect to the empirical measure, i.e., our approximation is $\int f(x)\pi(dx)\approx \int  f(x)L^n(dx)$.

The first works on large deviations for sampling algorithms appeared in the early 2010s \cite{PDD+11, dupuis2012infinite}. In these papers, the authors study \textit{parallel tempering}, an algorithm typically used to sample from multimodal distributions. Remarkably, the use of the large deviations approach in the analysis of the parallel tempering algorithm led to the design of a new sampling scheme, the \textit{infinite swapping algorithm}. Later works in this line of research include \cite{rey2015irreversible, rey2015variance, rey2016improving, bierkens2016non}, where the empirical measure large deviation principle is used to show that algorithms based on non-reversible Markov processes exhibit faster convergence. Further studies on the parallel tempering algorithm, and the related infinite swapping algorithm, are carried out in \cite{doll2018large} and \cite{DW22}. In the former, the large deviation rate function is used to derive convergence properties of the two methods, while in the latter, the temperatures (hyperparameters) of these algorithms are tuned via the empirical measure LDP. In \cite{bierkens2021large} the empirical measure large deviation is used to study and optimize the zig-zag sampler.

The first study of algorithms of MH type in continuous state space using large deviations techniques is presented in \cite{milinanni2024a}. Therein, it is proved a large deviation principle for general discrete-time stochastic processes, of which the MH algorithms are a special case. In a later work \cite{milinanni2024b}, the LDP from \cite{milinanni2024a} is applied to specific algorithms from the MH class, specifically, the IMH, MALA and the RWM algorithms.

The goal of this paper is to leverage the LDP derived in \cite{milinanni2024a} to tune algorithms of MH type to achieve faster convergence. Since important information on the algorithms' speed of convergence can be derived from the large deviation rate function $I$, the first part of this paper is dedicated to analyzing this object, complementing the results from \cite{milinanni2024a}.

The large deviation rate function in \cite{milinanni2024a} is characterized as the minimum of a relative entropy functional (also referred to as Kullback-Leibler divergence), where the minimum is taken over a specific type of probability measures. Since optimization over probability measures can be highly demanding, in this paper we provide an alternative representation of the rate function, which is dual to that in \cite{milinanni2024a}. This new representation expresses the rate function as the solution to a maximization problem over \textit{continuous functions} that satisfy certain properties. A key advantage of this alternative representation is that optimizing over continuous functions is often more tractable compared to optimizing over probability measures. In addition, we also provide an equivalent version of this representation, where the maximization is over \textit{measurable functions} satisfying certain properties. All representations are presented together in Theorem~\ref{thm:update}.

Based on the various representations of the large deviation rate function $I$, we derive explicit upper and lower bounds for $I$ that are easier to compute numerically than the rate function itself. These bounds allow us to construct intervals that approximate $I$, enabling us to quantify the rate of convergence of MH algorithms, avoiding the costly exact computation of the rate function.

Building on these bounds, we design three large deviation-based tuning schemes aimed at identifying ``near-optimal'' hyperparameters (in the sense of leading to fastest convergence) for sampling algorithms of MH type. We also present an illustrative example demonstrating the application of one of these schemes to tune the IMH algorithm. 
To our knowledge, these are the first results on tuning algorithms of Metropolis-Hastings type based on the theory of large deviations. In future work, we will consider more advanced algorithms of MH type, such as MALA and HMC. The ultimate goal is to develop large deviation-based tuning schemes that can be used to pre-calibrate Metropolis-Hastings algorithms in applications, where limited information on the target $\pi$ is typically available.

The remainder of the paper is organized as follows. In Sections~\ref{sec:notation}, \ref{subsec:MH}, and~\ref{sec:ldp}, we describe the notation used throughout the paper, the algorithms of Metropolis-Hastings type, and the large deviations result from \cite{milinanni2024a}, respectively. In Section~\ref{sec:alternative}, we prove two alternative representations of the LDP rate function and gather all representations in Theorem~\ref{thm:update}. In Section~\ref{sec:bounds}, we derive upper and lower bounds for the rate function. We then use these results to design three large deviation-based tuning schemes, described in Section~\ref{sec:tuning}. The paper ends with an illustrative example in Section~\ref{sec:IMH} on the use of one of the tuning schemes to optimize the hyperparameters in the IMH algorithm. 

\section{Preliminaries}
\subsection{Notation}
\label{sec:notation}
Throughout this paper we will work on some probability space $(\Omega, \mathcal{F},\mathbb{P})$.

Given a Polish space $S$, $\mathcal{C}(S)$ denotes the space of continuous functions on $S$, and $\mathcal{M}(S)$ the space of measurable functions on $S$. As Polish space $S$ we will consider continuous subsets of $\mathbb{R}^d$, $d\ge1$.

The space of probability measures on a Polish space $S$ will be indicated as $\mathcal{P}(S)$, and it will be metrized through the Lévy-Prohorov distance $\text{dist}_{LP}$. For any $\varepsilon>0$, $\mu\in\mathcal{P}(S)$,
\begin{align*}
    B_\varepsilon(\mu)=\{\nu\in\mathcal{P}(S)\,|\,\text{dist}_{LP}(\mu,\nu)<\varepsilon\}
\end{align*}
denotes the ball of radius $\varepsilon$ centered at $\mu$ in the Lévy-Prohorov metric.

With a slight abuse of notation, given a measure $\mu(dx)\in\mathcal{P}(S)$, we will denote by $\mu(x)$ its density with respect to the Lebesgue measure (if it exists). 

Given $\theta,\varphi\in\mathcal{P}(S)$, we denote the \textit{relative entropy} between $\theta$ and $\varphi$ as
\begin{equation*}
    R(\theta\parallel\varphi)=\begin{cases}
        \int_{S}\log\frac{d\theta}{d\varphi}d\theta,\quad\text{if }\theta\ll\varphi,\\
        +\infty,\qquad\qquad \text{ otherwise}.
    \end{cases}
\end{equation*}

For $\gamma\in\mathcal{P}(S\times S)$, let $[\gamma]_1(dx)$ and $[\gamma]_2(dx)$ denote the first and second marginals of $\gamma(dx,dy)$, respectively, and define 
\begin{equation}
    \label{def:Amu}
    A(\mu)=\left\{\gamma\in\mathcal{P}(S\times S)\, | \,[\gamma]_1=[\gamma]_2=\mu\right\}.
\end{equation}

For a measurable space, let $q(x,\cdot)\in\mathcal{P}(S)$ be a collection of probability measures parametrized by $x\in S$. We call $q$ a stochastic kernel on $S$ if, for every measurable set $A\in\mathcal{B}(S)$, $x\mapsto q(x,A)\in[0,1]$ is a measurable function. 

Given a probability measure $\mu\in\mathcal{P}(S)$ and a stochastic kernel $q(x,dy)$ on $S$, we say that $\mu$ is invariant for $q(x,dy)$ if, for every measurable set $A\in\mathcal{B}(S)$,
 \begin{align*}
     \mu(A)=\int_Sq(x,A)\mu(dx).
 \end{align*}
 We define $\mathcal{Q}(\mu)$ as the set of all stochastic kernels $q(x,dy)$ on $S$ such that $\mu$ is an invariant distribution for $q$.

 Given any stochastic kernel $q(x,dy)$ and a function $u:S\to \mathbb{R}$, we define
\begin{equation*}
    (qu)(x)=\int_Su(y)q(x,dy).
\end{equation*}

\subsection{The Metropolis-Hastings algorithm}
\label{subsec:MH}
Given a target probability distribution $\pi\in\mathcal{P}(S)$, the Metropolis-Hastings (MH) algorithm produces a discrete time Markov chain $\{X_i\}_{i\in\mathbb{N}}$ that has $\pi$ as invariant distribution. An essential element of the MH algorithm is the proposal distribution $J(\cdot|x)\in\mathcal{P}(S)$, defined for all $x\in S$. Different choices for the proposal $J$ lead to different MH algorithms. 

The MH algorithm is an iterative algorithm, illustrated in Algorithm~\ref{alg:MH}. At iteration $i$, assume that the state of the chain is in $X_i=x_i\in S$. The MH algorithm generates a proposal $Y_{i+1}$ for the next state of the chain, $X_{i+1}$, by sampling from $J(\cdot|x_i)\in\mathcal{P}(S)$. The proposal $Y_{i+1}$ is then accepted or rejected based on the \textit{Hastings ratio}
\begin{align*}
    \varpi(x,y)=\min\left\{1,\frac{\pi(y)J(x|y)}{\pi(x)J(y|x)}\right\}.
\end{align*}
Specifically, with probability $\varpi(x_i,Y_{i+1})$ we accept the proposal and set $X_{i+1}=Y_{i+1}$. Under rejection, i.e., with probability $1-\varpi(x_i,Y_{i+1})$, we set $X_{i+1}=x_i$.

\begin{algorithm}
\caption{Metropolis-Hastings algorithm ($i$-th iteration)}
\label{alg:MH}
\begin{algorithmic}[1]
\Require Given $X_i=x_i$,
 \State Generate a proposal $Y_{i+1}\sim J(\cdot|x_i)$
 \State Set
    \begin{equation*}
        X_{i+1}=\begin{cases}
            Y_{i+1}\quad \text{with probability }\varpi(x_i,Y_{i+1})\\
            x_i\quad \text{with probability }1-\varpi(x_i,Y_{i+1})
        \end{cases}
    \end{equation*}
\end{algorithmic}
\end{algorithm}

Algorithm~\ref{alg:MH} generates a Markov chain characterized by the following stochastic transition kernel:
\begin{equation}
\label{MHtransitionKernel}
    K(x,dy)=a(x,y)dy+r(x)\delta_x(dy),
\end{equation}
where 
\begin{equation}
\label{MHacceptance}
    a(x,y)=\min\left\{1,\frac{\pi(y)J(x|y)}{\pi(x)J(y|x)}\right\}J(y|x)
\end{equation}
and
\begin{equation}
\label{MHrejection}
    r(x)=1-\int_Sa(x,y)dy.
\end{equation}

\subsection{Large deviation principle for Metropolis-Hastings Markov chains}
\label{sec:ldp}
Large deviation theory studies the asymptotic rate of decay of rare events, which is quantified via a \textit{large deviation principle} (LDP).

Let $\mathcal{Y}$ be a Polish space and $\{Y_n\}\subset \mathcal{Y}$ a sequence of random variables on $\mathcal{Y}$. We say that $\{Y_n\}$ satisfies an LDP on $\mathcal{Y}$ with \textit{speed} $n$ and \textit{rate function} $I:\mathcal{Y}\to[0,\infty]$ if $I$ has compact sub-level sets and, for every measurable set $A\subset\mathcal{Y}$, the following inequalities are satisfied:
\begin{equation}
\label{inequality_in_general}
\begin{split}
        -\inf_{y\in A^\circ}I(y)&\le \liminf_{n\to\infty}\frac{1}{n}\log\mathbb{P}(Y^n\in A^\circ)\\
    &\le \limsup_{n\to\infty}\frac{1}{n}\log\mathbb{P}(Y^n\in \bar A)\\
    &\le -\inf_{y\in \bar A}I(y),
\end{split}
\end{equation}
where $A^\circ$ and $\bar A$ denote the interior and the closure of $A$, respectively.

Roughly speaking, these inequalities suggest that 
\begin{align*}
    \mathbb{P}(Y^n\in A)\simeq e^{-n\cdot \inf_{y\in A}I(y)}.
\end{align*}
If $\inf_{y\in A}I(y)>0$, this indicates that $\lim_{n\to\infty}\mathbb{P}(Y^n\in A)=0$, that is, the events $(Y^n\in A)$ are more and more rare as $n\to+\infty$. By means of the rate function $I$, the LDP allows us to quantify the exponential rate of decay of the probability of the rare event $(Y^n\in A)$.

In \cite{milinanni2024a}, a new framework based on large deviation theory was developed to describe the convergence of stochastic processes arising from algorithms of MH type. Here, we summarize this approach and the large deviation result upon which this paper builds.

Let $\{X_i\}_{i\ge0}$ be a discrete time Markov chain. For every $n\in\mathbb{N}$, the \textit{empirical measure} of the first $n$ elements of the chain is the random probability distribution $L^n\in\mathcal{P}(S)$ defined as 
\begin{equation*}
    L^n(dx)=\frac{1}{n}\sum_{i=0}^{n-1}\delta_{X_i}(dx).
\end{equation*}
Consider now Markov chains $\{X_i\}$ associated with an algorithm of MH type. Under mild assumptions on the target distribution $\pi$ and the proposal $J(\cdot|x)$, the corresponding sequence of empirical measures $\{L^n\}$ converges almost surely to $\pi$ in the weak topology, i.e.,
\begin{align}
\label{weak_convergence}
    \lim_{n\to\infty}L^n=\pi,\qquad\text{a.s.}
\end{align}

This convergence implies that for any set $A\subset \mathcal{P}(S)$ bounded away from $\pi$ (that is, $\text{dist}_{LP}(A,\pi)>\varepsilon$ for some $\varepsilon>0$),
\begin{align}
\label{lim_prob_A}
    \lim_{n\to\infty}\mathbb{P}(L^n\in A)=0.
\end{align}

The main result in \cite{milinanni2024a}, stated below, characterizes the rate of convergence of \eqref{lim_prob_A} by means of a large deviation principle.

\begin{theorem}[Theorem 4.1 in \cite{milinanni2024a}]
    \label{TheoremMN}
    Let $\{X_i\}_{i\ge0}$ be the Metropolis–Hastings chain and $K(x,dy)$ the associated transition kernel. Let $\{L^n\}_{n\ge1}\subset\mathcal{P}(S)$ be the corresponding sequence of empirical measures. Under Assumptions (A.1)–(A.3) (see \cite[Section 3]{milinanni2024a}), $\{L^n\}_{n\ge1}$ satisfies a large deviation principle with speed $n$ and rate function
\begin{align}
\label{oldRateFunction}
    I(\mu)&=\inf_{\gamma\in A(\mu)}R(\gamma\parallel \mu\otimes K).
\end{align}
\end{theorem}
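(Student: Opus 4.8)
The plan is to prove the large deviation principle in its equivalent \emph{Laplace principle} form, following the weak-convergence approach of Dupuis and Ellis, and then read off the stated rate function. Since $I$ turns out to be a good rate function (compact sub-level sets, discussed below), it is enough to show that for every bounded continuous $F:\mathcal{P}(S)\to\mathbb{R}$,
\begin{equation*}
\lim_{n\to\infty}-\frac{1}{n}\log\mathbb{E}\left[e^{-nF(L^n)}\right]=\inf_{\mu\in\mathcal{P}(S)}\left[F(\mu)+I(\mu)\right].
\end{equation*}
The engine of the proof is the variational representation of the left-hand side. Writing the law of $(X_0,\dots,X_n)$ as a reference and applying the Donsker--Varadhan variational formula together with the chain rule for relative entropy, one obtains
\begin{equation*}
-\frac{1}{n}\log\mathbb{E}\left[e^{-nF(L^n)}\right]=\inf_{\{\bar\mu_i\}}\mathbb{E}\left[\frac{1}{n}\sum_{i=0}^{n-1}R\bigl(\bar\mu_i(\cdot)\,\big\|\,K(\bar X_i,\cdot)\bigr)+F(\bar L^n)\right],
\end{equation*}
where the infimum runs over sequences of controlled (history-dependent) transition kernels $\bar\mu_i=\bar\mu_i(\cdot\mid\bar X_0,\dots,\bar X_i)$, the process $\{\bar X_i\}$ is the associated controlled chain, and $\bar L^n$ is its empirical measure. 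The running cost is precisely a prelimit of $R(\gamma\parallel\mu\otimes K)$, which is what makes this representation the natural route to the claimed rate function.

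For the lower Laplace bound, $\liminf_{n}(\cdot)\ge\inf_\mu[F(\mu)+I(\mu)]$, I would take controls that are $1/n$-optimal in the representation and introduce the controlled \emph{pair} empirical measure $\bar L^n_2(dx,dy)=\frac1n\sum_{i=0}^{n-1}\delta_{(\bar X_i,\bar X_{i+1})}(dx,dy)$. A uniform bound on the expected cost (obtained by comparing against the trivial control $\bar\mu_i=K(\bar X_i,\cdot)$, for which the cost vanishes) yields tightness of $\{\bar L^n_2\}$, so along a subsequence it converges in distribution to a random limit $\gamma\in\mathcal{P}(S\times S)$. Since the two marginals of $\bar L^n_2$ are the empirical measures of $(\bar X_i)_{0\le i\le n-1}$ and of $(\bar X_i)_{1\le i\le n}$, they differ only in two boundary terms of size $1/n$; hence any limit satisfies $[\gamma]_1=[\gamma]_2=:\mu$, i.e.\ $\gamma\in A(\mu)$. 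Joint lower semicontinuity of relative entropy, together with the chain rule, then lets me pass to the limit and bound the liminf below by $\mathbb{E}\bigl[R(\gamma\parallel\mu\otimes K)+F(\mu)\bigr]\ge\inf_\mu[F(\mu)+I(\mu)]$.

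For the matching upper Laplace bound, $\limsup_{n}(\cdot)\le\inf_\mu[F(\mu)+I(\mu)]$, I would argue by construction: fix $\mu$ and an almost-optimal $\gamma\in A(\mu)$ for $I(\mu)$, disintegrate $\gamma(dx,dy)=\mu(dx)\,\phi(x,dy)$, and run the controlled chain with the time-homogeneous Markov control $\bar\mu_i=\phi(\bar X_i,\cdot)$. The constraint $[\gamma]_2=\mu$ says exactly that $\mu$ is invariant for $\phi$, so an ergodic/law-of-large-numbers argument gives $\bar L^n\to\mu$ and $\bar L^n_2\to\gamma$ while the running cost converges to $R(\gamma\parallel\mu\otimes K)$; optimizing over $\gamma$ and then over $\mu$ closes the bound.

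The main obstacle, and the feature that prevents a direct appeal to classical Donsker--Varadhan theory, is the singular structure of the Metropolis--Hastings kernel $K(x,dy)=a(x,y)\,dy+r(x)\delta_x(dy)$. The rejection term places an atom on the diagonal $\{x=y\}$, so $\mu\otimes K$ is neither absolutely continuous with respect to a fixed product reference measure nor non-atomic, and $R(\gamma\parallel\mu\otimes K)$ must be handled by splitting both $\gamma$ and $\mu\otimes K$ into their diagonal and off-diagonal parts and tracking the mass the control assigns to each. Establishing lower semicontinuity across this decomposition, and securing the tightness and \emph{goodness} (compact sub-level sets of $I$) used above, is where Assumptions (A.1)--(A.3) on $\pi$ and $J$ are essential; I would verify goodness separately by showing that each sub-level set $\{I\le c\}$ is uniformly tight and closed, using the coercivity of relative entropy together with the assumed regularity of the target and proposal.
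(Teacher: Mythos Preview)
The theorem you have attempted to prove is not proved in this paper: it is quoted verbatim as Theorem~4.1 of \cite{milinanni2024a} and used as a black box on which the rest of the paper (alternative representations, bounds, tuning schemes) is built. There is therefore no ``paper's own proof'' to compare against here.

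That said, your sketch is the right one and matches the strategy actually carried out in \cite{milinanni2024a}: the weak-convergence/Laplace-principle approach of Dupuis--Ellis and Budhiraja--Dupuis, with the variational representation producing a controlled chain and a running relative-entropy cost, the pair empirical measure $\bar L^n_2$ as the state variable, the marginal-matching observation giving $\gamma\in A(\mu)$ in the limit, and a Markov control built from a near-optimal $\gamma$ for the upper Laplace bound. Your identification of the Metropolis--Hastings atom $r(x)\delta_x(dy)$ as the key technical obstruction, handled by a diagonal/off-diagonal decomposition under Assumptions (A.1)--(A.3), is also the crux of that reference. So the approach is correct; it simply lives in the cited paper rather than in this one.
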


\begin{remark}
\label{rmk:rateFunc_kernels}
    By the chain rule for the relative entropy (see \cite[Theorem 2.6]{budhiraja2019analysis}), the rate function in \eqref{oldRateFunction} can be rewritten as
\begin{align}
\label{rateFuncWithQ}
    I(\mu) = \inf_{ q\in\mathcal{Q}(\mu)}\int_SR( q(x,\cdot)\parallel K(x,\cdot))\mu(dx),
\end{align}
where $\mathcal{Q}(\mu)$ denotes the set of probability kernels $q(x,dy)$ that have $\mu$ as invariant measure.
\end{remark}

Among the assumptions of Theorem~\ref{TheoremMN}, we suppose that the state space $S$ is a continuous subset of $\mathbb{R}^d$, and that both the target $\pi$ and the MH proposal distributions $J(\cdot|x)$ are absolutely continuous with respect to the Lebesgue measure on $S$.

Recalling \eqref{inequality_in_general}, the LDP in Theorem~\ref{TheoremMN} implies that for any measurable set $A\subset \mathcal{P}(S)$,
\begin{equation*}
\begin{split}
    -\inf_{\mu\in A^\circ}I(\mu)&\le \liminf_{n\to\infty}\frac{1}{n}\log\mathbb{P}(L^n\in A^\circ)\\
    &\le \limsup_{n\to\infty}\frac{1}{n}\log\mathbb{P}(L^n\in \bar A)\\
    &\le -\inf_{\mu\in \bar A}I(\mu),
\end{split}
\end{equation*}
giving the following asymptotic decay of probability: 
\begin{align}
\label{asymptotics}
    \mathbb{P}(L^n\in A)\simeq \exp\left\{-n\,\cdot\inf_{\mu\in A}I(\mu)\right\}.
\end{align}

If we choose $A = B_\varepsilon(\pi)^\complement$, for some $\varepsilon>0$, then \eqref{asymptotics} becomes
\begin{equation*}
\begin{split}
\mathbb{P}\big(L^n \in B_\varepsilon(\pi&)^\complement\big)= \mathbb{P}(\text{dist}_{LP}(L^n,\pi)\ge\varepsilon)\\
&\simeq \exp\left\{-n\,\cdot\inf_{\mu\in B_\varepsilon(\pi)^\complement} I(\mu)\right\},
\end{split}   
\end{equation*}
therefore, the probability that the empirical measure $L^n$ is close to the target $\pi$ by $\varepsilon$ ($\text{dist}_{LP}(L^n,\pi)<\varepsilon$) satisfies
\begin{align}
\label{asymptotics_B}
    \mathbb{P}\big(L^n \in B_\varepsilon(\pi&)\big)\simeq 1-\exp\left\{-n\cdot\inf_{\mu\in B_\varepsilon(\pi)^\complement} I(\mu)\right\}
\end{align}
For an intuitive illustration, see Figure~\ref{fig:ball}.

\begin{figure*}[t]
\centering
\includegraphics[width=0.8\textwidth]{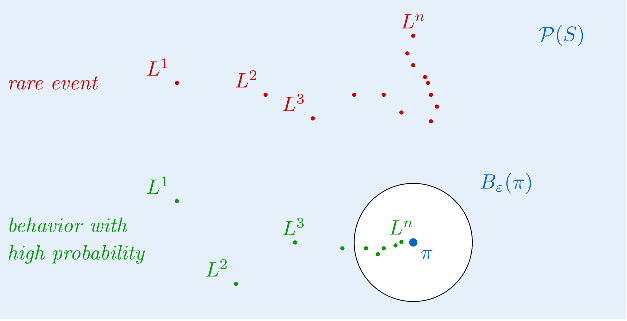}
    \caption{Illustration of the space $\mathcal{P}(S)$ of probability measures on a Polish space $S$, the target measure $\pi\in\mathcal{P}(S)$, the complement of the ball of radius $\varepsilon>0$ centered at $\pi$, $B_\varepsilon(\pi)^\complement$, and two random sequences of empirical measures $\{L^n\}$ corresponding to the behaviors with high and low probability (green and red dots, respectively). With high probability, for large $n$, the random empirical measure $L^n$ will be close to $\pi$, thus, $\mathbb{P}(L^n\in B_\varepsilon(\pi))\approx 1$ (green dots). Instead, $L^n\in B_\varepsilon(\pi)^\complement$ is a rare event (red dots) as $n$ grows, and the corresponding probability $\mathbb{P}(L^n\in B_\varepsilon(\pi)^\complement)$ decays to $0$ exponentially fast, as $n\to\infty$.}
    \label{fig:ball}
\end{figure*}

If follows from \eqref{asymptotics_B} that the larger the quantity
\begin{align}
\label{min_rateFunc}
    \inf_{\mu\in B_\varepsilon(\pi)^\complement} I(\mu)
\end{align}
is, the faster  $\mathbb{P}(\text{dist}_{LP}(L^n,\pi)<\varepsilon)$ converges to $1$. Thus, when designing an algorithm of MH type, we aim to maximize the quantity \eqref{min_rateFunc} to guarantee fast convergence to the target $\pi$. 

We end this section with the following proposition, showing that the optimization problem in \eqref{min_rateFunc} can be restricted to the boundary of the ball.

\begin{proposition}
\label{prop:min_on_sphere}
    Let
       \begin{align*}
       \partial B_\varepsilon(\pi) = \{\mu\in\mathcal{P}(S)\,|\, \textnormal{dist}_{LP}(\mu,\pi)=\varepsilon\}
   \end{align*}
   be the boundary of the ball $B_\varepsilon(\pi)$. The infimum in \eqref{min_rateFunc} satisfies
\begin{align}
\label{eq:equality_inf}
    \inf_{\mu\in B_\varepsilon(\pi)^\complement} I(\mu) = \inf_{\mu\in \partial B_\varepsilon(\pi)} I(\mu).
\end{align} 
\end{proposition}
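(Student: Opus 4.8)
The plan is to prove the two inequalities separately. Since $\partial B_\varepsilon(\pi)\subset B_\varepsilon(\pi)^\complement$, taking the infimum over the smaller set can only increase the value, so $\inf_{\mu\in\partial B_\varepsilon(\pi)}I(\mu)\ge\inf_{\mu\in B_\varepsilon(\pi)^\complement}I(\mu)$ is immediate. The work lies entirely in the reverse inequality, for which I would show that every $\mu\in B_\varepsilon(\pi)^\complement$ admits a companion $\nu\in\partial B_\varepsilon(\pi)$ with $I(\nu)\le I(\mu)$; taking the infimum over $\mu$ then yields $\inf_{\partial B_\varepsilon(\pi)}I\le\inf_{B_\varepsilon(\pi)^\complement}I$ and closes the argument.

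To build such a $\nu$, I would interpolate linearly between the target and $\mu$, setting $\mu_t=(1-t)\pi+t\mu$ for $t\in[0,1]$. Two facts drive the construction. First, the map $t\mapsto\mu_t$ is continuous into $\mathcal{P}(S)$ equipped with $\text{dist}_{LP}$, because $\int f\,d\mu_t$ is affine (hence continuous) in $t$ for every bounded continuous $f$, so $\mu_t\to\mu_{t_0}$ weakly, and $\text{dist}_{LP}$ metrizes the weak topology on the Polish space $S$. Consequently $g(t):=\text{dist}_{LP}(\mu_t,\pi)$ is continuous with $g(0)=0$ and $g(1)\ge\varepsilon$; in the nontrivial case $g(1)>\varepsilon$ (if $g(1)=\varepsilon$ then $\mu$ already lies on the boundary and we take $\nu=\mu$), the intermediate value theorem gives $t^\ast\in(0,1)$ with $g(t^\ast)=\varepsilon$, i.e. $\mu_{t^\ast}\in\partial B_\varepsilon(\pi)$.

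Second, I would establish that $I$ is convex and vanishes at $\pi$. Convexity follows directly from the representation \eqref{oldRateFunction}: given near-optimal couplings $\gamma_0\in A(\mu_0)$ and $\gamma_1\in A(\mu_1)$, the convex combination $\gamma_t=(1-t)\gamma_0+t\gamma_1$ has $(1-t)\mu_0+t\mu_1$ as both marginals, hence lies in $A(\mu_t)$; since $\mu\mapsto\mu\otimes K$ is linear, the reference measure also combines as $\mu_t\otimes K=(1-t)(\mu_0\otimes K)+t(\mu_1\otimes K)$, and the joint convexity of relative entropy gives $R(\gamma_t\parallel\mu_t\otimes K)\le(1-t)R(\gamma_0\parallel\mu_0\otimes K)+tR(\gamma_1\parallel\mu_1\otimes K)$, whence $I(\mu_t)\le(1-t)I(\mu_0)+tI(\mu_1)$ after passing to the infimum. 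That $I(\pi)=0$ follows because $\pi\otimes K\in A(\pi)$ — its first marginal is $\pi$, and its second marginal is $\pi$ by invariance of $\pi$ under $K$ — so $I(\pi)\le R(\pi\otimes K\parallel\pi\otimes K)=0$.

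Combining the two facts, convexity together with $I(\pi)=0$ yields $I(\mu_{t^\ast})\le(1-t^\ast)I(\pi)+t^\ast I(\mu)=t^\ast I(\mu)\le I(\mu)$, using $I\ge0$ and $t^\ast\le1$; this also holds trivially when $I(\mu)=+\infty$. Setting $\nu=\mu_{t^\ast}$ completes the construction. I expect the convexity of $I$ to be the crux: the one point demanding care is verifying that the convex combination of couplings remains admissible and that the reference measure $\mu\otimes K$ transforms linearly along the segment, so that joint convexity of the relative entropy can be invoked cleanly; the continuity of $g$ and the appeal to the intermediate value theorem are then routine.
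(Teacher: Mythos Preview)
Your proposal is correct and follows essentially the same approach as the paper: interpolate via $\mu_t=(1-t)\pi+t\mu$, use continuity of $t\mapsto\text{dist}_{LP}(\mu_t,\pi)$ and the intermediate value theorem to land on $\partial B_\varepsilon(\pi)$, and then invoke convexity of $I$ together with $I(\pi)=0$ to conclude $I(\mu_{t^\ast})\le I(\mu)$. The only difference is that the paper cites the convexity of $I$ and the fact that $I(\pi)=0$ from \cite{milinanni2024a}, whereas you supply self-contained arguments for both; this is a matter of presentation, not a different route.
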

\begin{proof}

    We now prove that for any $\mu\in B_\varepsilon(\pi)^\complement$ there exists a $\bar\mu\in\partial B_\varepsilon(\pi)$ such that $I(\bar\mu)\le I(\mu)$. Once we establish this, \eqref{eq:equality_inf}  follows directly.

    For this proof, we recall the properties of the rate function discussed in \cite{milinanni2024a}: the rate function is convex, $I(\mu)\ge0$ for all $\mu\in\mathcal{P}(S)$, and $I(\mu)=0$ if and only if $\mu=\pi$.

    Given $\mu\in B_\varepsilon(\pi)^\complement$, define
    \begin{align*}
        \mu_t = t\,\mu+(1-t)\,\pi
    \end{align*}
    and $f:[0,1]\to[0,+\infty)$ as
    \begin{align*}
        f(t) = \text{dist}_{LP}(\mu_t,\pi).
    \end{align*}
    Note that $f$ is continuous,
    \begin{align*}
        \lim_{t\to0^+}f(t)=  \text{dist}_{LP}(\pi,\pi)=0,
    \end{align*}
    and, because $\mu\in B_\varepsilon(\pi)^\complement$,
    \begin{align*}
        \lim_{t\to1^-}f(t)=  \text{dist}_{LP}(\mu,\pi)\ge \varepsilon.
    \end{align*}
    By continuity of $f$, there exists $\bar t\in [0,1]$ such that 
    \begin{align*}
        \text{dist}_{LP}(\mu_{\bar t},\pi)=\varepsilon,
    \end{align*}
    that is, $\bar\mu=\mu_{\bar t}\in \partial B_\varepsilon(\pi)$.

    Because the rate function is convex, $t\in[0,1]$ and $I(\pi)=0$,
    \begin{align*}
        I(\mu_{\bar t})&=I(\bar t\,\mu+(1-\bar t)\,\pi)\le \bar t\,I(\mu)+(1-\bar t)I(\pi)\\
        &=\bar tI(\mu)\le I(\mu),
    \end{align*}
    completing the proof.
\end{proof}

\section{Alternative representation of the rate function}
\label{sec:alternative}
In this section, we show that the rate function of the LDP from Theorem~\ref{TheoremMN} admits an alternative representation. This representation is dual to \eqref{oldRateFunction}. In \eqref{oldRateFunction}, $I(\mu)$ is obtained by solving a minimization problem over probability measures $\gamma\in A(\mu)$ (or, equivalently, over stochastic kernels $q\in\mathcal{Q}(\mu)$). Instead, in the following Proposition, we characterize $I(\mu)$ as a minimization problem over a certain class of continuous functions.

\begin{proposition}
\label{thm:alternativeRepresentation}
Let $\mathcal{U}$ be the set of continuous functions on $S$ that are bounded away from $0$ and $+\infty$, i.e.,
\begin{align*}
    \mathcal{U}=\big\{u\in\mathcal{C}(S)\,|\,&\exists \, c_l,c_u\in(0,+\infty) \\
    & \text{ s.t. }\;c_l \le u(x)\le c_u,\;\forall x\in S\big\},
\end{align*}
and define
\begin{align*}
    I^*(\mu)=-\inf_{u\in\mathcal{U}}\int_S\log\frac{Ku}{u}d\mu.
\end{align*}
The rate function $I(\cdot)$ in Theorem~\ref{TheoremMN} satisfies
\begin{align*}
    I(\mu)=I^*(\mu),
\end{align*}
for all $\mu\in\mathcal{P}(S)$.
\end{proposition}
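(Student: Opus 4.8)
The plan is to read the identity $I=I^*$ as a convex--duality statement and to establish the two inequalities separately, working from the kernel representation \eqref{rateFuncWithQ} and the Donsker--Varadhan variational characterization of relative entropy. The inequality $I(\mu)\ge I^*(\mu)$ is the easy (weak-duality) direction. Fix $u\in\mathcal{U}$; since $c_l\le u\le c_u$, the function $\log u$ is bounded, and the Gibbs lower bound for relative entropy gives, for every $x$,
\[
    R(q(x,\cdot)\parallel K(x,\cdot)) \ge \int_S \log u(y)\, q(x,dy) - \log (Ku)(x).
\]
Integrating against $\mu(dx)$ and using that $q\in\mathcal{Q}(\mu)$ forces the second marginal of $\mu\otimes q$ to equal $\mu$, the mixed term simplifies to $\int_S \log u\,d\mu$, so the right-hand side collapses to $\int_S \log\frac{u}{Ku}\,d\mu$. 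Taking the infimum over $q\in\mathcal{Q}(\mu)$ and then the supremum over $u\in\mathcal{U}$ yields $I(\mu)\ge I^*(\mu)$.

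The reverse inequality $I(\mu)\le I^*(\mu)$ is the substantive part and requires ruling out a duality gap. I would view the primal problem as the minimization of the convex functional $q\mapsto\int_S R(q(x,\cdot)\parallel K(x,\cdot))\,\mu(dx)$ subject to the linear invariance constraint $\int_S q(x,\cdot)\,\mu(dx)=\mu$, and attach a Lagrange multiplier $h$ (a bounded continuous function) to this constraint. The inner minimization over $q$ decouples across $x$ and is solved by the dual Donsker--Varadhan identity
\[
    \inf_{\nu\in\mathcal{P}(S)}\Big[R(\nu\parallel K(x,\cdot)) - \int_S h\,d\nu\Big] = -\log (Ke^h)(x),
\]
attained at the tilted kernel $q_h(x,dy)=\frac{e^{h(y)}}{(Ke^h)(x)}\,K(x,dy)$. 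Writing $u=e^h$, which ranges over exactly $\mathcal{U}$ as $h$ ranges over the bounded continuous functions, the dual objective becomes $\int_S \log\frac{u}{Ku}\,d\mu$, confirming that the dual value equals $I^*(\mu)$.

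To close the gap I would exhibit a primal--dual pair satisfying complementary slackness. If $u^*\in\mathcal{U}$ is optimal for $I^*$, the natural candidate minimizer is $q^*(x,dy)=\frac{u^*(y)}{(Ku^*)(x)}\,K(x,dy)$, and the requirement $q^*\in\mathcal{Q}(\mu)$ is precisely the Euler--Lagrange stationarity condition $\int_S \frac{u^*(y)}{(Ku^*)(x)}K(x,dy)\,\mu(dx)=\mu(dy)$. For such a pair the Gibbs bound holds with equality, so $\int_S R(q^*(x,\cdot)\parallel K(x,\cdot))\,\mu(dx)=\int_S \log\frac{u^*}{Ku^*}\,d\mu=I^*(\mu)$, giving $I(\mu)\le I^*(\mu)$. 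The genuine difficulty is justifying this step rigorously: I expect to invoke the compactness of the sublevel sets of $\gamma\mapsto R(\gamma\parallel\mu\otimes K)$ (supplied by Theorem~\ref{TheoremMN}) together with the convexity and lower semicontinuity of relative entropy to obtain a primal minimizer, and to verify a Slater-type constraint qualification ensuring the inf--sup interchange (e.g.\ via a minimax theorem). The most delicate points I anticipate are showing that the optimal multiplier can be taken \emph{continuous} and bounded away from $0$ and $+\infty$ (so that it lies in $\mathcal{U}$, rather than merely among measurable functions, which is what motivates the measurable-function version in Theorem~\ref{thm:update}), and treating the degenerate case $I(\mu)=+\infty$.
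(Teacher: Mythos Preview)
Your weak-duality direction $I(\mu)\ge I^*(\mu)$ is correct and cleanly argued. The hard direction, however, is left as a plan rather than a proof: you identify the right Lagrangian structure and the tilted kernel $q_h$, but the step that actually closes the gap---producing a \emph{continuous} bounded multiplier $u^*\in\mathcal{U}$ for which the tilted kernel lies in $\mathcal{Q}(\mu)$, or else justifying a minimax interchange over the full class $\mathcal{U}$---is never carried out. You flag these as ``the most delicate points,'' and they are: a minimax theorem requires a compactness/semicontinuity setup that is not immediate here, and there is no reason in general for a maximizer in $\mathcal{U}$ to exist (the supremum may be approached only along a sequence), so the complementary-slackness route via an optimal $u^*$ may simply not be available.

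The paper sidesteps all of this by not proving the duality from scratch. Instead it recognizes $I^*(\mu)$ and $I(\mu)$ as the specializations to $\alpha=\beta=\mu$ of the two functionals $I(\alpha,\beta)$ and $J(\alpha,\beta)$ appearing in Theorem~2.1 of Donsker and Varadhan \cite{donsker1975asymptotic3}, and invokes that theorem to obtain $I(\alpha,\beta)=J(\alpha,\beta)$ directly. The only work done in the paper is the identification $J(\mu,\mu)=\inf_{\gamma\in A(\mu)}R(\gamma\parallel\mu\otimes K)$, which follows from the standard variational formula for relative entropy. In effect, the delicate no-gap argument you anticipate is outsourced to \cite{donsker1975asymptotic3}; your approach would reprove that classical result in the present special case, which is conceptually illuminating but substantially harder to make rigorous.
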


\begin{proof}
Let
\begin{align*}
    \mathcal{M}_{\alpha,\beta}=\{\gamma\in\mathcal{P}(S\times S)\;\text{ s.t.} \;[\gamma]_1=\alpha, [\gamma]_2=\beta\},
\end{align*}
and note that $A(\mu)=\mathcal{M}_{\mu,\mu}$. Let 
    \begin{align*}
    \mathcal{V}=\big\{u\in\mathcal{C}&(S\times S)\,|\,\exists \, c_l, c_u \in(0,+\infty) \;\\
    &\text{ s.t. }\; c_l \le u(x,y)\le c_u,\;\forall (x,y)\in S\times S\big\}.
\end{align*}

    In this proof, we will apply Theorem 2.1 in \cite{donsker1975asymptotic3}, which states that for all $\alpha,\beta \in \mathcal{P}(S)$,
    \begin{align}
    \label{theorem2.1DV}
        I(\alpha,\beta)=J(\alpha,\beta),
    \end{align}
    where
    \begin{align*}
        &I(\alpha,\beta)\\
        &=-\inf_{u\in \mathcal{U}}\left[\int \log(Ku)(x)\alpha(dx)-\int\log u(x)\beta(dx)\right],
    \end{align*}
    and
    \begin{align*}
        &J(\alpha,\beta)  \\
        &= \inf_{\lambda\in\mathcal{M}_{\alpha,\beta}}\Bigg\{-\inf_{u\in\mathcal{V}}\Bigg[\log\iint u(x,y)K(x,dy)\alpha(dx)\\
        &  \hspace*{3cm} -\iint \log u(x,y)\lambda(\alpha,\beta)\Bigg]\Bigg\}.
    \end{align*}

    Let $\alpha=\beta=\mu$. Note that $I(\mu,\mu) = I^*(\mu)$. Since $\mathcal{M}_{\mu,\mu}=A(\mu)$, then, 
    \begin{align*}
        &J(\mu,\mu) \\
        &= \inf_{\gamma \in A(\mu)}\Bigg\{-\inf_{u\in\mathcal{V}}\Bigg[\log\iint u(x,y)K(x,dy)\mu(dx)\\
        &\hspace*{3cm} -\iint \log u(x,y)\gamma(dx,dy)\Bigg]\Bigg\}.
    \end{align*}
    
    By the Donsker-Varadhan variational formula for the relative entropy (see \cite[Lemma 2.4]{budhiraja2019analysis}), for all $\gamma,\theta\in\mathcal{P}(S\times S)$,
    \begin{align*}
        R(\gamma\parallel \theta)&=\sup_{g\in \mathcal{C}_b(S\times S)}\Bigg[\iint g(x,y)\gamma(dx,dy)\\
        & \hspace*{2cm} -\log\iint e^{g(x,y)}\theta(dx,dy)\Bigg]\\
        &=-\inf_{u\in\mathcal{V}}\Bigg[\log\iint u(x,y)\theta(dx,dy)\\
        & \hspace*{2cm} -\iint\log u(x,y)\gamma(dx,dy)\Bigg],
    \end{align*}
    where we let $u=e^{g}$. Recalling \eqref{oldRateFunction}, it follows that 
    \begin{align*}
        I(\mu)=\inf_{\gamma\in A(\mu)}R(\gamma\parallel \mu\otimes K)= J(\mu,\mu).
    \end{align*}
    By Donsker and Varadhan's result \eqref{theorem2.1DV}, $J(\mu,\mu)=I(\mu,\mu)$, which in turn is equal to $I^*(\mu)$. Thus, we obtain that $I(\mu)=I^*(\mu)$, which completes the proof.
\end{proof}

The previous result allows us to represent the large deviation rate function as an optimization problem over continuous functions bounded away from zero and infinity. In the following Proposition, we prove that such problem is equivalent to optimizing over \textit{measurable functions} bounded away from zero and infinity.

\begin{proposition}
    \label{prop:measurable_func}
    Let $\mathcal{U}'$ be the set of measurable functions on $S$ that are bounded away from $0$ and $+\infty$, i.e.,
\begin{align*}
    \mathcal{U}'=\big\{u\in\mathcal{M}(S)\,|\,&\exists \, c_l,c_u\in(0,+\infty) \\
    & \text{ s.t. }\;c_l \le u(x)\le c_u,\;\forall x\in S\big\},
\end{align*}
and define
\begin{align*}
    I^\dagger(\mu)=-\inf_{u\in\mathcal{U}'}\int_S\log\frac{Ku}{u}d\mu.
\end{align*}
The rate function $I(\cdot)$ in Theorem~\ref{TheoremMN} satisfies
\begin{align*}
    I(\mu)=I^\dagger(\mu),
\end{align*}
for all $\mu\in\mathcal{P}(S)$.
\end{proposition}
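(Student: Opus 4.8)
The plan is to reduce Proposition~\ref{prop:measurable_func} to the already-established Proposition~\ref{thm:alternativeRepresentation} by showing that enlarging the feasible class from continuous functions $\mathcal{U}$ to measurable functions $\mathcal{U}'$ does not change the value of the infimum. One inequality is immediate: since $\mathcal{U}\subseteq\mathcal{U}'$, the infimum over the larger class is no larger, so $\inf_{u\in\mathcal{U}'}\int_S\log\frac{Ku}{u}\,d\mu\le\inf_{u\in\mathcal{U}}\int_S\log\frac{Ku}{u}\,d\mu$, which after negation reads $I^\dagger(\mu)\ge I^*(\mu)=I(\mu)$. The content of the proposition is therefore the reverse bound $I^\dagger(\mu)\le I(\mu)$, equivalently $\inf_{\mathcal{U}}\le\inf_{\mathcal{U}'}$. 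I would obtain it by approximating each competitor $u\in\mathcal{U}'$ by continuous functions $v_k\in\mathcal{U}$ for which $\int_S\log\frac{Kv_k}{v_k}\,d\mu\to\int_S\log\frac{Ku}{u}\,d\mu$; since each $v_k$ lies in $\mathcal{U}$, this forces $\inf_{\mathcal{U}}\le\int_S\log\frac{Ku}{u}\,d\mu$ for every $u\in\mathcal{U}'$, and taking the infimum over $u$ yields $\inf_{\mathcal{U}}\le\inf_{\mathcal{U}'}$.

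The approximation is where the only real difficulty lies, because the integrand couples two different reference measures. Writing $\int_S\log\frac{Ku}{u}\,d\mu=\int_S\log(Ku)\,d\mu-\int_S\log u\,d\mu$ and recalling the kernel decomposition $K(x,dy)=a(x,y)\,dy+r(x)\,\delta_x(dy)$, the term $\int_S\log u\,d\mu$ is sensitive to the behaviour of $u$ $\mu$-almost everywhere, while $(Ku)(x)=\int_S a(x,y)u(y)\,dy+r(x)u(x)$ mixes an absolutely continuous contribution (sensitive to $u$ Lebesgue-almost everywhere) with the atomic contribution $r(x)u(x)$ (again sensitive to $u$ $\mu$-almost everywhere). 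To control all three at once I would apply Lusin's theorem relative to a single finite reference measure that dominates both $\mu$ and Lebesgue measure: fixing a probability measure $\lambda$ equivalent to Lebesgue measure on $S$ (for instance a normalized Gaussian weight, harmless because $S\subseteq\mathbb{R}^d$), set $\nu=\tfrac12(\mu+\lambda)$. For each $k$, Lusin's theorem together with the Tietze extension theorem produces a continuous $v_k$ agreeing with $u$ off a set of $\nu$-measure less than $2^{-k}$, and truncating into $[c_l,c_u]$ keeps $v_k$ continuous, preserves the bounds, and does not enlarge the disagreement set, so $v_k\in\mathcal{U}$. By Borel--Cantelli, $v_k\to u$ $\nu$-almost everywhere, hence simultaneously $\mu$-almost everywhere and Lebesgue-almost everywhere.

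With this single sequence in hand, the convergence of the integrals follows from dominated convergence, the uniform bounds $c_l\le v_k\le c_u$ (inherited by $Kv_k$ because $K(x,\cdot)$ is a probability measure) providing the domination and keeping every logarithm bounded. Concretely, $\int_S\log v_k\,d\mu\to\int_S\log u\,d\mu$ from $\mu$-a.e. convergence; for each $x$ the absolutely continuous part $\int_S a(x,y)v_k(y)\,dy\to\int_S a(x,y)u(y)\,dy$ from Lebesgue-a.e. convergence (with $a(x,\cdot)$ integrable), while the atom $r(x)v_k(x)\to r(x)u(x)$ for $\mu$-a.e. $x$; combining these gives $(Kv_k)(x)\to(Ku)(x)$ for $\mu$-a.e. $x$ and then $\int_S\log(Kv_k)\,d\mu\to\int_S\log(Ku)\,d\mu$. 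Subtracting yields $\int_S\log\frac{Kv_k}{v_k}\,d\mu\to\int_S\log\frac{Ku}{u}\,d\mu$, completing the reverse inequality. Combining the two bounds gives $I^\dagger(\mu)=I^*(\mu)$, and Proposition~\ref{thm:alternativeRepresentation} identifies this common value with $I(\mu)$.

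The step I expect to be the main obstacle is precisely this simultaneous almost-everywhere control with respect to $\mu$ and Lebesgue measure: a naive Lusin approximation with respect to Lebesgue measure alone would leave both the atomic term $r(x)u(x)$ and the outer integral $\int_S\log u\,d\mu$ unconstrained on a Lebesgue-null but possibly $\mu$-positive set, which is exactly why the combined reference measure $\nu$ is needed.
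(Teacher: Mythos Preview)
Your argument follows the same path as the paper's proof: both reduce to Proposition~\ref{thm:alternativeRepresentation}, obtain one inequality from $\mathcal{U}\subset\mathcal{U}'$, and establish the reverse by Lusin-approximating each $u\in\mathcal{U}'$ by continuous functions obeying the same bounds, then passing to the limit via bounded/dominated convergence. The paper invokes Lusin relative to $\mu$ alone (following Lemma~2.6 in \cite{donsker1975asymptotic}), extracts a $\mu$-a.e.\ convergent subsequence, and appeals directly to the bounded convergence theorem. You go further on precisely the point you flag as the obstacle: because the MH kernel has the absolutely continuous component $\int_S a(x,y)u(y)\,dy$, $\mu$-a.e.\ convergence of the approximants does not by itself force $(Ku_{n})(x)\to(Ku)(x)$; you therefore run Lusin against the mixed reference measure $\nu=\tfrac12(\mu+\lambda)$ to obtain simultaneous $\mu$- and Lebesgue-a.e.\ convergence before invoking dominated convergence. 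This is a genuine tightening of the same argument rather than a different route; the paper's version is terser but leaves that particular passage to the limit less explicitly justified for kernels of the form \eqref{MHtransitionKernel}.
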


\begin{proof}
    The proof of this Proposition is analogous to the proof of Lemma 2.6 in \cite{donsker1975asymptotic}.

    Let $\mu\in\mathcal{P}(S)$. By Proposition~\ref{thm:alternativeRepresentation},
    \begin{align}
    \label{rate_func_prop}
        I(\mu) =-\inf_{u\in\mathcal{U}}\int_S\log\frac{Ku}{u}d\mu.
    \end{align}
    Since $\mathcal{U}\subset\mathcal{U}'$,
    \begin{equation}
    \label{ineq1}
        I(\mu) \le -\inf_{u\in\mathcal{U}'}\int_S\log\frac{Ku}{u}d\mu.
    \end{equation}
    Let 
    \begin{align*}
        \mathcal{U}''=\left\{u\in\mathcal{U}'\,|\,\int_S\log\frac{Ku}{u}d\mu\ge -I(\mu)\right\}.
    \end{align*}

    By \eqref{rate_func_prop}, we have that
    \begin{align*}
        \int_S\log\frac{Ku}{u}d\mu\ge -I(\mu),
    \end{align*}
    for all $u\in\mathcal{U}$,
    and since $\mathcal{U}\subset\mathcal{U}'$, it follows that $\mathcal{U}\subset\mathcal{U}''$.

    By Lusin's theorem, for every $u\in\mathcal{U}'$, there exists a sequence of continuous functions $\{u_n\}\subset \mathcal{C}(S)$ that converges to $u$ in probability with respect to $\mu$. Since for a fixed $u\in\mathcal{U}'$ there exist $c_l,c_u\in\mathbb{R}$ such that $0<c_l\le u\le c_u<+\infty$, we may assume that the same holds for $\{u_n\}$ with the same constants $c_l$ and $c_u$. That is, $\{u_n\}\subset \mathcal{U}\subset\mathcal{U}''$. Because $\lim_{n\to\infty}u_n=u$ in probability, there exists a subsequence $\{n_k\}$ such that $\lim_{k\to\infty}u_{n_k}=u$, $\mu$-almost surely. By the bounded convergence theorem, since $\{u_n\}\subset \mathcal{U}''$, then 
    \[-I(\mu)\le\lim_{k\to\infty}\int_S\log\frac{Ku_{n_k}}{u_{n_k}}d\mu=\int_S\log\frac{Ku}{u}d\mu,\]
    therefore also the limit function $u$ is an element of $\mathcal{U}''$. This implies that $\mathcal{U}'\subset\mathcal{U}''$ and we obtain, in particular, that $\mathcal{U}'=\mathcal{U}''$. As a consequence,
    \begin{align*}
        \int\log\frac{Ku}{u}d\mu\ge -I(\mu),
    \end{align*}
    for all $u\in\mathcal{U}'$, and therefore,
    \begin{align*}
         \inf_{u\in\mathcal{U}'}\int\log\frac{Ku}{u}d\mu\ge -I(\mu).
    \end{align*}
    This combined with \eqref{ineq1} implies that
    \begin{align*}
        I(\mu)=-\inf_{u\in\mathcal{U}'}\int\log\frac{Ku}{u}=I^\dagger(\mu),
    \end{align*}
    completing the proof.
\end{proof}

We can now update Theorem~\ref{TheoremMN} adding the alternative representation of the rate function from Proposition~\ref{thm:alternativeRepresentation}, together with its equivalent formulation provided in Proposition~\ref{prop:measurable_func}.

\begin{theorem}
\label{thm:update}
    Let $\{X_i\}_{i\ge0}$ be the Metropolis–Hastings chain and $K(x,dy)$ the associated transition kernel. Let $\{L^n\}_{n\ge1}\subset\mathcal{P}(S)$ be the corresponding sequence of empirical measures. Under Assumptions (A.1)–(A.3) (see \cite[Section 3]{milinanni2024a}), $\{L^n\}_{n\ge1}$ satisfies a large deviation principle with speed $n$ and rate function
    \begin{align}
        I(\mu) &=\inf_{\gamma\in A(\mu)}R(\gamma\parallel\mu\otimes K)\label{old:gamma}\\
        &=\inf_{q\in \mathcal{Q}(\mu)}\int_SR( q(x,\cdot)\parallel K(x,\cdot))\mu(dx) \label{oldRF}\\ 
        &=\sup_{u\in\mathcal{U}}-\int_S\log\frac{Ku}{u}d\mu \label{newRF}\\
        &=\sup_{u\in\mathcal{U}'}-\int_S\log\frac{Ku}{u}d\mu\label{newRFmeasurable}.
    \end{align}
\end{theorem}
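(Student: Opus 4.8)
The plan is to observe that Theorem~\ref{thm:update} is a consolidation of results already established in the excerpt, so the proof reduces to invoking each one and reconciling the notation between the $-\inf$ form used in the supporting propositions and the $\sup$ form in which the theorem is stated. Concretely, I would establish the chain of equalities \eqref{old:gamma}, \eqref{oldRF}, \eqref{newRF}, \eqref{newRFmeasurable} link by link.

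First I would treat \eqref{old:gamma} and \eqref{oldRF}. Equality \eqref{old:gamma} is precisely the statement of Theorem~\ref{TheoremMN} (Theorem 4.1 of \cite{milinanni2024a}), so under Assumptions (A.1)--(A.3) the empirical measures satisfy the LDP with rate function $I(\mu)=\inf_{\gamma\in A(\mu)}R(\gamma\parallel\mu\otimes K)$. To pass to \eqref{oldRF} I would invoke Remark~\ref{rmk:rateFunc_kernels}: disintegrating $\gamma(dx,dy)=[\gamma]_1(dx)\,q(x,dy)$ and applying the chain rule for relative entropy gives $R(\gamma\parallel\mu\otimes K)=R([\gamma]_1\parallel\mu)+\int_S R(q(x,\cdot)\parallel K(x,\cdot))\,[\gamma]_1(dx)$. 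The constraint $\gamma\in A(\mu)$ forces $[\gamma]_1=\mu$, killing the first term, while $[\gamma]_2=\mu$ is exactly the requirement that $\mu$ be invariant for $q$, i.e.\ $q\in\mathcal{Q}(\mu)$; minimizing over $\gamma\in A(\mu)$ therefore coincides with minimizing over $q\in\mathcal{Q}(\mu)$.

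Next I would identify the two $\sup$ representations with the quantities already computed. Proposition~\ref{thm:alternativeRepresentation} gives $I(\mu)=I^*(\mu)=-\inf_{u\in\mathcal{U}}\int_S\log\frac{Ku}{u}\,d\mu$, and since $-\inf_u g(u)=\sup_u\bigl(-g(u)\bigr)$ this is literally \eqref{newRF}. Likewise Proposition~\ref{prop:measurable_func} gives $I(\mu)=I^\dagger(\mu)=-\inf_{u\in\mathcal{U}'}\int_S\log\frac{Ku}{u}\,d\mu$, which is \eqref{newRFmeasurable} after the same sign flip. The only bookkeeping here is to confirm that the function classes $\mathcal{U}$ (continuous, bounded away from $0$ and $+\infty$) and $\mathcal{U}'$ (measurable, same bounds) named in the theorem are exactly those of the two propositions, which they are.

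Since every link has already been established, there is no genuine obstacle at the level of Theorem~\ref{thm:update} itself; the statement is an assembly step, and the substantive work lives in the propositions it cites: the passage to \eqref{newRF} rests on the Donsker--Varadhan duality (Theorem 2.1 of \cite{donsker1975asymptotic3}) together with the Donsker--Varadhan variational formula for relative entropy, and the upgrade from continuous to measurable test functions in \eqref{newRFmeasurable} rests on the Lusin-theorem approximation argument of Proposition~\ref{prop:measurable_func}. If I had to flag a point demanding care, it would be ensuring the $-\inf$/$\sup$ translation is applied consistently and that no extra regularity on $\mu$ is silently required when moving between the measure-side representations \eqref{old:gamma}--\eqref{oldRF} and the function-side representations \eqref{newRF}--\eqref{newRFmeasurable}; but both hold for every $\mu\in\mathcal{P}(S)$ by the cited propositions, so the equalities close for all of $\mathcal{P}(S)$.
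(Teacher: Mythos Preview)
Your proposal is correct and matches the paper's approach: the paper presents Theorem~\ref{thm:update} as a consolidation of Theorem~\ref{TheoremMN}, Remark~\ref{rmk:rateFunc_kernels}, Proposition~\ref{thm:alternativeRepresentation}, and Proposition~\ref{prop:measurable_func}, and does not supply a separate proof beyond the sentence introducing it. Your identification of each link and the $-\inf$/$\sup$ bookkeeping is exactly what is needed.
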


In the following, we will refer to the rate function representations \eqref{old:gamma} and \eqref{oldRF} as the \textit{relative entropy representations}, whereas we will indicate \eqref{newRF} and \eqref{newRFmeasurable} as the \textit{Donsker-Varadhan representations}.

\section{Rate function upper and lower bounds}
\label{sec:bounds}

All representations of the rate function in Theorem~\ref{thm:update} require solving an optimization problem: to compute \eqref{old:gamma} and \eqref{oldRF} we need to minimize over probability measures $\gamma\in A({\mu})$ and transition kernels in $\mathcal{Q}(\mu)$, respectively, whereas for \eqref{newRF} and \eqref{newRFmeasurable} we have to maximize over functions in $\mathcal{U}$ and $\mathcal{U}'$, respectively. These tasks can be very demanding. Nevertheless, from these different representations, we can derive upper and lower bounds for the rate function, that still provide valuable information on $I(\mu)$.

\subsection{Rate function upper bounds from the relative entropy representation}
We start by showing how to obtain upper bounds for the rate function from the relative entropy representation \eqref{oldRF}.

\begin{corollary}[Rate function upper bound]
\label{cor:upper_bound}
Let $I(\mu)$ be the rate function of the large deviation principle in Theorem~\ref{thm:update}. Let let $\bar q(x,dy)$ be any probability kernel that has $\mu$ as invariant distribution, i.e., $\bar q\in\mathcal{Q}(\mu)$. Then,
    \begin{equation}
    \label{upper_bound_q}
       I(\mu)\le \int_SR(\bar q(x,\cdot)|| K(x,\cdot))\mu(dx).
    \end{equation}
\end{corollary}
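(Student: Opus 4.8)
The plan is to read this off directly from the relative entropy representation \eqref{oldRF} established in Theorem~\ref{thm:update}, which expresses the rate function as an infimum over the feasible set of kernels:
\begin{align*}
    I(\mu) = \inf_{q\in\mathcal{Q}(\mu)}\int_S R(q(x,\cdot)\parallel K(x,\cdot))\,\mu(dx).
\end{align*}
The entire content of the corollary is the elementary fact that an infimum over a set is bounded above by the value of the objective at any single admissible point. Since we are given $\bar q\in\mathcal{Q}(\mu)$ by hypothesis, $\bar q$ is a feasible competitor in this minimization problem, and therefore the infimum cannot exceed the value attained at $\bar q$.

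Concretely, I would first invoke \eqref{oldRF} to rewrite $I(\mu)$ as the stated infimum over $\mathcal{Q}(\mu)$. I would then note that because $\bar q$ has $\mu$ as an invariant distribution, it lies in $\mathcal{Q}(\mu)$, so the map $q\mapsto \int_S R(q(x,\cdot)\parallel K(x,\cdot))\,\mu(dx)$ is defined at $q=\bar q$. Taking the infimum over all $q\in\mathcal{Q}(\mu)$ and specializing to $q=\bar q$ yields exactly
\begin{align*}
    I(\mu)=\inf_{q\in\mathcal{Q}(\mu)}\int_S R(q(x,\cdot)\parallel K(x,\cdot))\,\mu(dx)\le \int_S R(\bar q(x,\cdot)\parallel K(x,\cdot))\,\mu(dx),
\end{align*}
which is \eqref{upper_bound_q}. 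There is essentially no obstacle here: the inequality is an immediate consequence of the variational (infimum) form of the rate function, and it holds trivially even in the degenerate case where the right-hand side equals $+\infty$ (for instance if $\bar q(x,\cdot)\not\ll K(x,\cdot)$ on a set of positive $\mu$-measure), since $I(\mu)$ is always finite-or-infinite but bounded above by $+\infty$. The only point worth stating explicitly in the writeup is the membership $\bar q\in\mathcal{Q}(\mu)$, which guarantees feasibility and is precisely the hypothesis of the corollary.
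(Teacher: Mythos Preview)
Your proposal is correct and follows exactly the same approach as the paper: invoke the relative entropy representation \eqref{oldRF} and observe that since $\bar q\in\mathcal{Q}(\mu)$ is feasible, the infimum is bounded above by the value of the objective at $\bar q$. There is nothing to add.
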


\begin{proof}
    The inequality \eqref{upper_bound_q} follows directly from the relative entropy representation of the rate function~\eqref{oldRF}: because $\bar q\in \mathcal{Q}(\mu)$, we have that
    \begin{equation*}
    \begin{split}
        I(\mu) &= \inf_{ q\in\mathcal{Q}(\mu)}\int_SR( q(x,\cdot)\parallel K(x,\cdot))\mu(dx) \\
        &\le \int_SR(\bar q(x,\cdot)\parallel K(x,\cdot))\mu(dx).
    \end{split}
    \end{equation*}

\end{proof}

Based on Corollary~\ref{cor:upper_bound}, any stochastic kernel $\bar q\in\mathcal{Q}(\mu)$ determines an upper bound $\int R(\bar q(x,\cdot)\parallel K(x,\cdot))\mu(dx)$ for the rate function $I(\mu)$. In the following two Propositions, we provide explicit upper bounds by considering specific choices of kernels $\bar q\in\mathcal{Q}(\mu)$.

\begin{proposition}[Upper bound by the independent transition kernel]
\label{prop:upper_bound_indep}
    Let $\mu\ll\pi$ and let $\mu(x)$ be the density of $\mu$ with respect to the Lebesgue measure on $S$. The rate function $I(\mu)$ in the large deviation principle from Theorem~\ref{thm:update} satisfies
    \begin{align*}
        I(\mu)\le \iint \log\left(\frac{\mu(y)}{a(x,y)}\right)\mu(y)\mu(x)\,dy\,dx.
    \end{align*}
\end{proposition}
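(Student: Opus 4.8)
The plan is to invoke Corollary~\ref{cor:upper_bound} with a cleverly chosen kernel $\bar q\in\mathcal{Q}(\mu)$. Since the corollary guarantees that \emph{any} kernel having $\mu$ as invariant distribution yields an upper bound, the entire task reduces to (i) exhibiting such a kernel, and (ii) computing the resulting relative entropy integral and checking that it equals the claimed double integral. The form of the target bound, $\iint \log(\mu(y)/a(x,y))\,\mu(y)\mu(x)\,dy\,dx$, strongly suggests using the \emph{independent} kernel $\bar q(x,dy)=\mu(dy)=\mu(y)\,dy$, i.e.\ the kernel that proposes a fresh sample from $\mu$ irrespective of the current state $x$. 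This is the natural candidate, and it matches the name of the proposition.

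\textbf{Step 1: verify $\bar q\in\mathcal{Q}(\mu)$.}
First I would confirm that the independent kernel $\bar q(x,dy)=\mu(dy)$ indeed has $\mu$ as invariant distribution. This is immediate from the definition of invariance: for any measurable $A$,
\begin{align*}
    \int_S \bar q(x,A)\,\mu(dx)=\int_S \mu(A)\,\mu(dx)=\mu(A),
\end{align*}
so $\bar q\in\mathcal{Q}(\mu)$ and Corollary~\ref{cor:upper_bound} applies.

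\textbf{Step 2: evaluate the relative entropy.}
Next I would compute $R(\bar q(x,\cdot)\parallel K(x,\cdot))$ for fixed $x$. Here the assumption $\mu\ll\pi$ (and absolute continuity of the MH ingredients) is used to ensure $\bar q(x,\cdot)\ll K(x,\cdot)$, so the relative entropy is finite and given by an integral of a log-density ratio. Recalling from \eqref{MHtransitionKernel} that $K(x,dy)=a(x,y)\,dy+r(x)\delta_x(dy)$ has an absolutely continuous part with density $a(x,y)$, the Radon--Nikodym derivative of $\bar q(x,\cdot)=\mu(y)\,dy$ against $K(x,\cdot)$ is $\mu(y)/a(x,y)$ on the absolutely continuous part; the atom at $x$ carries no $\bar q$-mass, so it does not contribute. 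Hence
\begin{align*}
    R(\bar q(x,\cdot)\parallel K(x,\cdot))=\int_S \log\!\left(\frac{\mu(y)}{a(x,y)}\right)\mu(y)\,dy.
\end{align*}
Integrating this against $\mu(dx)=\mu(x)\,dx$ and applying Corollary~\ref{cor:upper_bound} yields the claimed bound.

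\textbf{Main obstacle.}
The delicate point is the handling of the singular (rejection) component of $K$. I would need to argue carefully that the Lebesgue decomposition of $K(x,\cdot)$ into the absolutely continuous piece $a(x,y)\,dy$ and the atom $r(x)\delta_x$ does not spoil the absolute continuity $\bar q(x,\cdot)\ll K(x,\cdot)$, and that the atomic part contributes nothing to the relative entropy because $\bar q(x,\{x\})=0$ (as $\mu$ is absolutely continuous, a single point is $\mu$-null). Once this measure-theoretic bookkeeping is settled—essentially that the log-density ratio is determined purely by the densities $\mu(y)$ and $a(x,y)$—the remaining computation is routine. A secondary check is that the right-hand side is well-defined: one should note that it may equal $+\infty$ (e.g.\ where $a(x,y)=0$ but $\mu(y)>0$), in which case the bound holds trivially, so no further integrability argument is required for the inequality to be valid.
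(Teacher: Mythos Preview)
Your proposal is correct and follows exactly the same approach as the paper: choose the independent kernel $\bar q(x,dy)=\mu(dy)$, verify $\bar q\in\mathcal{Q}(\mu)$ by the one-line invariance computation, and then evaluate $R(\mu\parallel K(x,\cdot))=\int_S\log(\mu(y)/a(x,y))\,\mu(y)\,dy$ before integrating in $x$. If anything, your treatment of the singular part of $K$ and the possible $+\infty$ value is more explicit than the paper's own proof, which simply writes down the relative-entropy integral without commenting on the atom or on absolute continuity.
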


\begin{proof}
    To obtain this upper bound we apply Corollary~\ref{cor:upper_bound} with the independent transition kernel $\bar q(x,dy)=\mu(dy)$, where ``independent'' refers to the fact that it does not depend on $x$. This kernel is an element of $\mathcal{Q}(\mu)$, i.e., it has $\mu$ as invariant measure, because for every $A\in\mathcal{B}(S)$,
    \begin{align*}
        \int_S\bar q(x,A)\mu(dx)=\int_S \mu(A)\mu(dx)=\mu(A).
    \end{align*}
    
    For a fixed $x\in S$, the relative entropy between the independent kernel $\mu(dy)$ and the MH transition kernel $K(x,dy)$ is given by
    \begin{align*}
    R(\bar q(x,dy)\parallel K(x,dy)) &= R(\mu(dy)\parallel K(x,dy)) \\
    &= \int_S \log \left(\frac{\mu(y)}{a(x,y)}\right)\mu(y)dy.
\end{align*}
The upper bound \eqref{upper_bound_q} thus becomes
\begin{align*}
    \int_S &R(\bar q(x,\cdot)|| K(x,\cdot))\mu(dx)\\
    &=\iint \log\left(\frac{\mu(y)}{a(x,y)}\right)\mu(y)\mu(x)dy\,dx.
\end{align*}
    
\end{proof}

\begin{remark}
    In Proposition~\ref{prop:upper_bound_indep}, we assume that $\mu$ is absolutely continuous with respect to $\pi$, otherwise we would obtain $R(\mu(dy)\parallel K(x,dy))=+\infty$, which would lead to a trivial bound ($+\infty$).
\end{remark}

\begin{proposition}[Upperbound by the MH transition kernel]
\label{prop:upper_bound_MH}
    Let $\mu\ll\pi$ and let $\mu(x)$ be the density of $\mu$ with respect to the Lebesgue measure on $S$. Let
    \begin{align}
    \label{eq:bar_a}
        \bar a(x,y) = \min\left\{1,\frac{\mu(y)J(x|y)}{\mu(x)J(y|x)}\right\}J(y|x),
    \end{align}
    and
    \begin{align}
    \label{eq:bar_r}
        \bar r(x) = 1- \int_S\bar a (x,y)dy.
    \end{align}
    
    The rate function $I(\mu)$ in the large deviation principle from Theorem~\ref{thm:update} satisfies
    \begin{align*}
        I(\mu)\le &\iint \log\left(\frac{\bar a(x,y)}{a(x,y)}\right)\bar a(x,y)\mu(x)dy\,dx \\
        &+ \int \log\left(\frac{\bar r(x)}{r(x)}\right)\bar r(x)\mu(dx).
    \end{align*}
\end{proposition}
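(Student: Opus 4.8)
The plan is to invoke Corollary~\ref{cor:upper_bound} with a carefully chosen kernel $\bar q\in\mathcal{Q}(\mu)$, namely the Metropolis--Hastings kernel obtained by running the same proposal $J(\cdot|x)$ but with $\mu$ playing the role of the target in place of $\pi$. Concretely, I would set
\begin{equation*}
    \bar q(x,dy)=\bar a(x,y)\,dy+\bar r(x)\,\delta_x(dy),
\end{equation*}
with $\bar a$ and $\bar r$ as in \eqref{eq:bar_a}--\eqref{eq:bar_r}. This mirrors the structure \eqref{MHtransitionKernel}--\eqref{MHrejection} of $K$, the only difference being that the Hastings ratio is formed with $\mu$ instead of $\pi$. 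The whole bound then follows from \eqref{oldRF} once two things are established: that $\bar q$ indeed lies in $\mathcal{Q}(\mu)$, and that the fibrewise relative entropy $R(\bar q(x,\cdot)\parallel K(x,\cdot))$ evaluates to the claimed integrand.

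First I would check $\bar q\in\mathcal{Q}(\mu)$, i.e.\ that $\mu$ is invariant for $\bar q$. As for any MH kernel targeting $\mu$, this is a consequence of reversibility: the identity
\begin{equation*}
    \mu(x)\,\bar a(x,y)=\min\{\mu(x)J(y|x),\,\mu(y)J(x|y)\}=\mu(y)\,\bar a(y,x)
\end{equation*}
exhibits detailed balance for the absolutely continuous part, and integrating it in $x$ gives $\int_S\bar a(x,y)\mu(x)\,dx=\mu(y)\bigl(1-\bar r(y)\bigr)$. Combining this with the atomic contribution $\int_A\bar r(x)\mu(dx)$ yields $\int_S\bar q(x,A)\mu(dx)=\mu(A)$ for every $A\in\mathcal{B}(S)$, as required. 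The assumption $\mu\ll\pi$ (with density $\mu(x)$) is what makes $\bar a,\bar r$ well defined, exactly as in Proposition~\ref{prop:upper_bound_indep}.

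The computational heart of the proof is the fibrewise relative entropy. For fixed $x$, both $\bar q(x,\cdot)$ and $K(x,\cdot)$ split into a part that is absolutely continuous with respect to Lebesgue measure (with densities $\bar a(x,\cdot)$ and $a(x,\cdot)$) and an atom at $x$ (with masses $\bar r(x)$ and $r(x)$). Since the Lebesgue measure assigns no mass to $\{x\}$, these two components are mutually singular, so the Radon--Nikodym derivative $d\bar q(x,\cdot)/dK(x,\cdot)$ equals $\bar a(x,y)/a(x,y)$ off the atom and $\bar r(x)/r(x)$ at the atom. Hence
\begin{equation*}
    R(\bar q(x,\cdot)\parallel K(x,\cdot))=\int_S\log\!\left(\frac{\bar a(x,y)}{a(x,y)}\right)\bar a(x,y)\,dy+\log\!\left(\frac{\bar r(x)}{r(x)}\right)\bar r(x),
\end{equation*}
and integrating against $\mu(dx)$ and applying \eqref{upper_bound_q} produces the stated bound.

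The step I expect to require the most care is this last decomposition of the relative entropy between two measures that each carry a Lebesgue-continuous part and an atom. I would need to confirm that $\bar q(x,\cdot)\ll K(x,\cdot)$ so that the entropy is finite and the split is legitimate: off the atom this reduces to $a(x,y)>0$ whenever $\bar a(x,y)>0$, which holds because $a$ and $\bar a$ share the common factor $J(y|x)$ and differ only through their strictly positive $\min$-factors; at the atom one needs $r(x)>0$ whenever $\bar r(x)>0$. Granting this, the additivity of the relative entropy over the mutually singular continuous and atomic components is routine, and the remainder of the argument is bookkeeping.
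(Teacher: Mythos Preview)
Your proposal is correct and follows essentially the same approach as the paper: choose $\bar q$ to be the Metropolis--Hastings kernel targeting $\mu$ with the same proposal $J$, invoke detailed balance to get $\bar q\in\mathcal{Q}(\mu)$, compute $R(\bar q(x,\cdot)\parallel K(x,\cdot))$ by splitting into the absolutely continuous and atomic parts, and apply Corollary~\ref{cor:upper_bound}. If anything, you supply more justification than the paper does---the paper simply asserts the relative-entropy decomposition without discussing the mutual singularity of the two components or the absolute-continuity caveat $r(x)>0$ whenever $\bar r(x)>0$.
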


\begin{proof}
    For this upper bound we apply Corollary~\ref{cor:upper_bound} with the kernel $\bar q(x,dy)$ corresponding to the MH transition kernel with target $\mu(dx)$ instead of $\pi(dx)$, and same proposal probability $J(dy|x)$. The kernel that we consider is therefore
    \begin{align*}
        \bar q(x,dy) = \bar a(x,y)dy + \bar r(x)\delta_x(dy),
    \end{align*}
    with $\bar a(x,y)$ and $\bar r(x)$ defined in \eqref{eq:bar_a} and \eqref{eq:bar_r}, respectively.

    Because MH transition kernels satisfy the detailed balance condition with the corresponding target density, $\mu(dx)$ is an invariant probability distribution for $\bar q(x,dy)$, that is, $\bar q\in\mathcal{Q}(\mu)$.

    For a fixed $x\in S$, the relative entropy between $\bar q(x,dy)$ and the MH transition kernel $K(x,dy)$ is given by
    \begin{align*}
    &R(\bar q(x,dy)\parallel K(x,dy)) \\
    &= R(\bar a(x,y) dy + \bar r(x)\delta_x(dy)\parallel  a(x,y) dy +  r(x)\delta_x(dy)) \\
    &= \int_S \log \left(\frac{\bar a(x,y)}{a(x,y)}\right)\bar a(x,y)dy + \log\left(\frac{\bar r(x)}{r(x)}\right)\bar r(x).
\end{align*}
From this we obtain that the upper bound \eqref{upper_bound_q} is 
\begin{align*}
    \int R(&\bar q(x,\cdot)|| K(x,\cdot))\mu(dx) \\
    &= \iint\log\left(\frac{\bar a(x,y)}{a(x,y)}\right)\bar a(x,y)\mu(x)dy\,dx \\
    &\qquad + \int \log\left(\frac{\bar r(x)}{r(x)}\right)\bar r(x)\mu(dx).
\end{align*}
\end{proof}

\subsection{Rate function lower bound from the Donsker-Varadhan representation}

Lower bounds for the rate function from Theorem~\ref{thm:update} can be obtained directly from the Donsker-Varadhan representation of the rate function~\eqref{newRFmeasurable}, as shown in Corollary~\ref{cor:lower_bound}.

\begin{corollary}[Rate function lower bound]
\label{cor:lower_bound}
Let $I(\mu)$ be the rate function of the large deviation principle in Theorem~\ref{thm:update}. Let let $\bar u$ be any function in $\mathcal{U}'$. Then,
\begin{equation}
\label{lower_bound}
       I(\mu)\ge -\int_S \log\frac{K\bar u}{\bar u} d\mu.
    \end{equation}
\end{corollary}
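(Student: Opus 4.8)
The plan is to read off the inequality directly from the Donsker--Varadhan representation of the rate function established in Theorem~\ref{thm:update}. Specifically, representation \eqref{newRFmeasurable} expresses $I(\mu)$ as a \emph{supremum} over all functions in $\mathcal{U}'$,
\begin{align*}
    I(\mu)=\sup_{u\in\mathcal{U}'}-\int_S\log\frac{Ku}{u}\,d\mu.
\end{align*}
Since $\bar u$ is assumed to be an element of $\mathcal{U}'$, it is a feasible point for this maximization problem, and the supremum is therefore at least as large as the value of the objective functional evaluated at $\bar u$.

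Concretely, I would simply write
\begin{align*}
    I(\mu)=\sup_{u\in\mathcal{U}'}-\int_S\log\frac{Ku}{u}\,d\mu\ge -\int_S\log\frac{K\bar u}{\bar u}\,d\mu,
\end{align*}
which is exactly \eqref{lower_bound}. This argument is the precise dual counterpart of the upper-bound Corollary~\ref{cor:upper_bound}: there, the rate function was an \emph{infimum} over kernels in $\mathcal{Q}(\mu)$, so evaluating at a single feasible kernel $\bar q$ produced an upper bound; here, the rate function is a \emph{supremum} over functions in $\mathcal{U}'$, so evaluating at a single feasible function $\bar u$ produces a lower bound.

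There is no substantive obstacle to overcome, as the statement is an immediate consequence of the variational (supremum) characterization. The only point worth checking for correctness is that the objective $-\int_S\log\frac{K\bar u}{\bar u}\,d\mu$ is well-defined and finite for any $\bar u\in\mathcal{U}'$; this is guaranteed because $\bar u$ is bounded away from $0$ and $+\infty$, so $K\bar u$ inherits the same two-sided bounds (being an average of $\bar u$ against the stochastic kernel $K(x,\cdot)$), and hence $\log\frac{K\bar u}{\bar u}$ is a bounded, measurable, $\mu$-integrable function. With this remark in place, the one-line inequality above completes the proof.
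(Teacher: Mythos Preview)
Your proof is correct and follows exactly the same approach as the paper: both derive the inequality directly from the Donsker--Varadhan representation \eqref{newRFmeasurable} by observing that $\bar u\in\mathcal{U}'$ is a feasible point in the supremum. Your additional remark on the well-definedness of $-\int_S\log\frac{K\bar u}{\bar u}\,d\mu$ is a nice sanity check that the paper omits.
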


\begin{proof}
Assuming $\bar u\in\mathcal{U}'$, from the Donsker-Varadhan representation of the rate function~\eqref{newRFmeasurable} we obtain   
\begin{align*}
    I(\mu)=\sup_{u\in\mathcal{U}'}-\int_S\log\frac{Ku}{u}d\mu\ge -\int_S\log\frac{K\bar u}{\bar u}d\mu.
\end{align*}
\end{proof}

Applying Corollary~\ref{cor:lower_bound} with specific choices of $\bar u$ we obtain explicit lower bounds for the rate function $I(\mu)$. One such lower bound is obtained in Proposition \ref{prop:lower_bound_derivative}.

\begin{proposition}[Lower bound by the Radon-Nikodym derivative $\frac{d\mu}{d\pi}$]
\label{prop:lower_bound_derivative}
Assume $\mu\ll\pi$ and let $\phi=\frac{d\mu}{d\pi}$ be the Radon-Nikodym derivative of $\mu$ with respect to $\pi$. Let $c_l,c_u\in\mathbb{R}$ satisfying $0<c_l<c_u<\infty$, and define

\begin{align}
    \label{phi}
    \bar\phi(x)  =(\phi(x)\vee c_l) \wedge c_u
\end{align}

Then $\bar\phi\in\mathcal{U}'$ and
\begin{align}
\label{lower_bound_phi}
    I(\mu) \ge -\int_S\log\frac{K\bar \phi}{\bar \phi}d\mu.
\end{align}
\end{proposition}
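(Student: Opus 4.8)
The plan is to reduce the statement to a direct application of Corollary~\ref{cor:lower_bound}. That corollary already supplies the inequality $I(\mu)\ge -\int_S\log\frac{K\bar u}{\bar u}\,d\mu$ for \emph{any} $\bar u\in\mathcal{U}'$, so the entire task is to verify that the truncated density $\bar\phi$ belongs to $\mathcal{U}'$. Once this membership is established, choosing $\bar u=\bar\phi$ yields \eqref{lower_bound_phi} immediately.

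To check $\bar\phi\in\mathcal{U}'$, I would argue in two steps. First, measurability: since $\mu\ll\pi$, the Radon--Nikodym theorem guarantees that $\phi=\frac{d\mu}{d\pi}$ exists and is a measurable function on $S$. The truncation $\bar\phi=(\phi\vee c_l)\wedge c_u$ is obtained from $\phi$ by taking pointwise maximum and minimum with the constants $c_l$ and $c_u$; since the map $t\mapsto (t\vee c_l)\wedge c_u$ is Borel measurable (indeed continuous) on $\mathbb{R}$, the composition $\bar\phi$ remains measurable, i.e.\ $\bar\phi\in\mathcal{M}(S)$.

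Second, boundedness away from $0$ and $+\infty$: by the definition of the truncation, $\phi(x)\vee c_l\ge c_l$ for every $x$, and hence $(\phi(x)\vee c_l)\wedge c_u\ge c_l\wedge c_u=c_l$, using $c_l<c_u$; simultaneously $(\phi(x)\vee c_l)\wedge c_u\le c_u$. Thus $c_l\le\bar\phi(x)\le c_u$ for all $x\in S$, with $0<c_l<c_u<\infty$ by assumption, which is exactly the defining property of $\mathcal{U}'$. Having verified $\bar\phi\in\mathcal{U}'$, the result follows by applying Corollary~\ref{cor:lower_bound} with $\bar u=\bar\phi$.

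I do not anticipate any genuine obstacle here: the statement is essentially a specialization of the general lower bound \eqref{lower_bound} to a concrete, conveniently truncated test function, and the only point to confirm is that truncating the Radon--Nikodym derivative simultaneously preserves measurability and forces the two-sided bounds required for membership in $\mathcal{U}'$. The role of the truncation levels $c_l$ and $c_u$ is precisely to guarantee this membership even when the raw density $\phi$ is unbounded or approaches zero, which is why one does not simply take $\bar u=\phi$ directly.
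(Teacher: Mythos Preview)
Your proposal is correct and follows exactly the same approach as the paper's own proof: verify that $\bar\phi\in\mathcal{U}'$ (measurable and bounded between $c_l$ and $c_u$) and then apply Corollary~\ref{cor:lower_bound} with $\bar u=\bar\phi$. The only difference is that you spell out the measurability and bound verifications in more detail than the paper, which simply asserts them ``by definition.''
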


\begin{remark}
    The constants $c_l$ and $c_u$ in Proposition~\ref{prop:lower_bound_derivative} should be selected such that $c_l$ is adequately small and $c_u$ is adequately large, depending on the specific case under consideration. In a computer implementation of such lower bound, $c_l$ could coincide with the machine epsilon and $c_u$ with the floating point upper bound.
\end{remark}

\begin{proof}
    The function $\bar \phi$ given in \eqref{phi} is, by definition, measurable and bounded away from $0$ and $+\infty$, thus $\bar \phi\in\mathcal{U}'$. The lower bound \eqref{lower_bound_phi} follows directly from Corollary~\ref{cor:lower_bound}, by letting $\bar u=\bar \phi$.
\end{proof}

\begin{remark}
Note that if $\mu(x)$ denotes the Radon-Nikodym derivative of $\mu$ with respect to the Lebesgue measure on $S$, then $\phi(x)$ is given by $\phi(x)=\frac{\mu(x)}{\pi(x)}$.
\end{remark}

\subsection{Rate function lower bound by the variational formula for the relative entropy}
One further lower bound can be obtained by using the variational formula for the relative entropy, given in \cite[Proposition 2.3 (a)]{budhiraja2019analysis}, that we report here for completeness.

\begin{proposition}[Proposition 2.3 (a) in \cite{budhiraja2019analysis}]
\label{prop:variational_formula_rel_entropy}
    Suppose that $\mathcal{X}$ is a Polish space and $\theta$ a probability measure on $\mathcal{X}$. If $k$ is a measurable function mapping $\mathcal{X}$ into $\mathbb{R}$ that is bounded from below, then
    \begin{equation}
    \begin{split}
         \label{variational_formula_rel_entropy}
        -\log\int_\mathcal{X}e^{-k}&d\theta= \inf_{\gamma\in \mathcal{P}(\mathcal{X})}\left[R(\gamma\parallel \theta) + \int_\mathcal{X}k\ d\gamma\right].
    \end{split}
    \end{equation}
\end{proposition}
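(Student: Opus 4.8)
The plan is to prove this classical variational formula (the Gibbs, or Donsker--Varadhan, variational principle) by introducing the \emph{tilted} probability measure associated with $k$ and showing that the right-hand side reduces, up to an additive constant, to a single relative entropy. Once this reduction is achieved, the formula follows immediately from the nonnegativity of relative entropy.

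First I would set $Z=\int_\mathcal{X}e^{-k}\,d\theta$ and verify that $Z\in(0,+\infty)$: since $k$ is bounded from below by some constant $m$, we have $0<e^{-k}\le e^{-m}$ pointwise, hence $0<Z\le e^{-m}<\infty$. This allows me to define the tilted measure $\gamma^\ast\in\mathcal{P}(\mathcal{X})$ through
\[
\frac{d\gamma^\ast}{d\theta}=\frac{e^{-k}}{Z},
\]
which is a genuine probability measure, equivalent to $\theta$ because its density is strictly positive and integrates to one.

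The heart of the argument is the ``completion of the square'' identity
\[
R(\gamma\parallel\theta)+\int_\mathcal{X}k\,d\gamma \;=\; R(\gamma\parallel\gamma^\ast)-\log Z,
\]
valid for every $\gamma\in\mathcal{P}(\mathcal{X})$. To obtain it I would first dispose of the singular case: if $\gamma\not\ll\theta$, then both $R(\gamma\parallel\theta)$ and $R(\gamma\parallel\gamma^\ast)$ equal $+\infty$ (using that $\theta$ and $\gamma^\ast$ are equivalent), so the identity holds trivially. For $\gamma\ll\theta$ I would use the chain rule for Radon--Nikodym derivatives to write $\frac{d\gamma}{d\gamma^\ast}=\frac{d\gamma}{d\theta}\cdot\frac{d\theta}{d\gamma^\ast}=\frac{d\gamma}{d\theta}\,Z\,e^{k}$, take logarithms, and integrate against $\gamma$; this yields $R(\gamma\parallel\gamma^\ast)=R(\gamma\parallel\theta)+\log Z+\int k\,d\gamma$, which is the claimed identity after rearranging.

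Finally, since $\log Z$ does not depend on $\gamma$, minimizing the right-hand side over $\gamma\in\mathcal{P}(\mathcal{X})$ is equivalent to minimizing $R(\gamma\parallel\gamma^\ast)$. Because relative entropy is nonnegative and vanishes exactly when its two arguments coincide, the infimum equals $-\log Z=-\log\int_\mathcal{X}e^{-k}\,d\theta$ and is attained at $\gamma=\gamma^\ast$, which is precisely \eqref{variational_formula_rel_entropy}. I expect the main technical obstacle to be the integrability bookkeeping in the identity: because $k$ is only bounded below, $\int k\,d\gamma$ lives in $(-\infty,+\infty]$, so I must treat the case $\int k\,d\gamma=+\infty$ separately (there the objective on the right is trivially $+\infty$ and cannot lower the infimum), and I must confirm that whenever both $R(\gamma\parallel\theta)<\infty$ and $\int k\,d\gamma<\infty$ all terms are finite, so that splitting the logarithm and interchanging it with the integral against $\gamma$ is legitimate.
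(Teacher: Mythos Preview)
The paper does not actually prove this proposition; it is quoted verbatim from \cite{budhiraja2019analysis} (``that we report here for completeness'') and used as a black box in the subsequent lower-bound derivation. So there is no ``paper's own proof'' to compare against.

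That said, your argument is the standard and correct one: introduce the exponentially tilted measure $\gamma^\ast$, reduce the objective to $R(\gamma\parallel\gamma^\ast)-\log Z$ via the Radon--Nikodym chain rule, and conclude by nonnegativity of relative entropy. Your handling of the edge cases (singular $\gamma$, and $\int k\,d\gamma=+\infty$) is appropriate, and the observation that $Z\in(0,\infty)$ because $k$ is bounded below is exactly what makes $\gamma^\ast$ well defined. This is essentially the proof one finds in \cite{budhiraja2019analysis}, so nothing is missing.
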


Using Proposition~\ref{prop:variational_formula_rel_entropy}, we obtain the following lower bound for the LDP rate function.

\begin{proposition}
    Assume that $\mu\ll\pi$ and that the Radon-Nikodym derivative $\frac{\mu(x)}{\pi(x)}$ is bounded. Then, 
    
    \begin{equation*}
        \label{lower_bound_variational}
        \begin{split}
             I(\mu)\ge -\log&\Bigg(1-\frac{1}{2}\iint \min\left\{\frac{J(y|x)}{\pi(y)},\frac{J(x|y)}{\pi(x)}\right\}\\
             &\times \left(\sqrt{\mu(x)\pi(y)}-\sqrt{\mu(y)\pi(x)}\right)^2dy\,dx\Bigg)
        \end{split}
    \end{equation*}
\end{proposition}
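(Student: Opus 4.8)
The plan is to derive the bound from the relative-entropy representation $I(\mu)=\inf_{\gamma\in A(\mu)}R(\gamma\parallel\mu\otimes K)$ in \eqref{old:gamma}, applying the variational formula of Proposition~\ref{prop:variational_formula_rel_entropy} on the product space $\mathcal{X}=S\times S$ with reference measure $\theta=\mu\otimes K$. The formula gives, for an admissible test function $k$,
\[
R(\gamma\parallel\mu\otimes K)\ge-\log\iint e^{-k(x,y)}\mu(dx)K(x,dy)-\iint k(x,y)\gamma(dx,dy).
\]
The key idea is to take $k$ of \emph{separable} form $k(x,y)=h(x)+g(y)$. Since every $\gamma\in A(\mu)$ has both marginals equal to $\mu$, the second term collapses to $\int h\,d\mu+\int g\,d\mu$, which no longer depends on $\gamma$. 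The right-hand side then becomes a constant lower bound valid for all $\gamma\in A(\mu)$, and hence a lower bound for $I(\mu)$ after taking the infimum.

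The decisive choice is $e^{-h(x)}=\sqrt{\pi(x)/\mu(x)}$ and $e^{-g(y)}=\sqrt{\mu(y)/\pi(y)}$, i.e. $h=\tfrac12\log\phi$ and $g=-\tfrac12\log\phi$ with $\phi=\tfrac{d\mu}{d\pi}$. With this choice the marginal term vanishes, $\int h\,d\mu+\int g\,d\mu=\tfrac12\int\log\phi\,d\mu-\tfrac12\int\log\phi\,d\mu=0$. It then remains to evaluate the exponential integral. Splitting $K(x,dy)=a(x,y)dy+r(x)\delta_x(dy)$ and using the cancellations $\sqrt{\pi(x)/\mu(x)}\,\mu(x)=\sqrt{\pi(x)\mu(x)}$ and $\sqrt{\mu(y)/\pi(y)}\,a(x,y)=\sqrt{\mu(y)\pi(y)}\,m(x,y)$, where $m(x,y):=a(x,y)/\pi(y)=\min\{J(y|x)/\pi(y),J(x|y)/\pi(x)\}$, I would obtain
\[
\iint e^{-k}\,d(\mu\otimes K)=\int r\,d\mu+\iint m(x,y)\sqrt{\mu(x)\pi(x)}\sqrt{\mu(y)\pi(y)}\,dy\,dx.
\]

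The final step is purely algebraic: I would show this equals $1-\tfrac12\iint m(x,y)\big(\sqrt{\mu(x)\pi(y)}-\sqrt{\mu(y)\pi(x)}\big)^2\,dy\,dx$. This rests on two observations. First, $m$ is \emph{symmetric}, $m(x,y)=m(y,x)$ (immediate from the definition, and equivalent to the detailed-balance identity $\pi(x)a(x,y)=\pi(y)a(y,x)$), so the two ``diagonal'' contributions $\tfrac12\iint m\,\mu(x)\pi(y)$ and $\tfrac12\iint m\,\mu(y)\pi(x)$ coincide. Second, $\int m(x,y)\pi(y)\,dy=\int a(x,y)\,dy=1-r(x)$, so each diagonal contribution equals $\tfrac12\big(1-\int r\,d\mu\big)$. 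Expanding the square and recombining then turns the exponential integral into $1-\tfrac12\iint m(\cdots)^2$, and taking $-\log$ yields the asserted bound.

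The main obstacle is technical: the function $k(x,y)=\tfrac12\log\big(\phi(x)/\phi(y)\big)$ is \emph{not} bounded from below (it tends to $-\infty$ where $\phi$ vanishes or is small), so Proposition~\ref{prop:variational_formula_rel_entropy} does not apply verbatim. I would circumvent this exactly as in Proposition~\ref{prop:lower_bound_derivative}, replacing $\phi$ by its truncation $\bar\phi=(\phi\vee c_l)\wedge c_u$ so that $k$ becomes bounded (the marginal term still vanishes, as the truncated $h$ and $g$ remain equal up to sign), applying the variational formula, and then letting $c_l\to0$ and $c_u\to\infty$; the hypothesis that $\phi$ is bounded, together with $\mu\ll\pi$, ensures via dominated/monotone convergence that the exponential integral converges to the expression above. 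Equivalently, one may invoke the Donsker--Varadhan inequality directly, whose ``$\ge$'' direction requires only that $\iint e^{-k}\,d(\mu\otimes K)$ be finite, which holds since that integral lies in $[0,1]$.
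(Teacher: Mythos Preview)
Your argument is correct and reaches the same bound, but via a slightly different route than the paper's proof. You apply Proposition~\ref{prop:variational_formula_rel_entropy} with reference measure $\theta=\mu\otimes K$ and the \emph{antisymmetric} test function $k(x,y)=\tfrac12\log\phi(x)-\tfrac12\log\phi(y)$; this makes the term $\int k\,d\gamma$ vanish for every $\gamma\in A(\mu)$, so the lower bound on $I(\mu)=\inf_{\gamma\in A(\mu)}R(\gamma\parallel\mu\otimes K)$ drops out immediately. The paper instead takes $\theta=\pi\otimes K$ and the \emph{symmetric} test function $k(x,y)=-\tfrac12\log\phi(x)-\tfrac12\log\phi(y)$; the marginal term does not vanish, and an additional step---the chain rule $R(\gamma\parallel\pi\otimes K)=R(\mu\parallel\pi)+\int R([\gamma]_{2|1}(\cdot\,|\,x)\parallel K(x,\cdot))\,\mu(dx)$ together with $R(\mu\parallel\pi)=\int\log\phi\,d\mu$---is used to cancel it. The payoff of the paper's choice is that its $k$ is bounded below directly by $-\log C$ (since $\phi\le C$), so Proposition~\ref{prop:variational_formula_rel_entropy} applies with no further work; your $k$ is unbounded below where $\phi$ is small, which forces the truncation--limit argument you outline. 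That argument is fine (dominated convergence goes through because, writing the exponential integral against $\pi(dx)K(x,dy)$, the integrand $\sqrt{\bar\phi(y)}\cdot\phi(x)/\sqrt{\bar\phi(x)}$ is bounded by $C$ uniformly in the truncation levels), but the paper's choice sidesteps it entirely. The subsequent evaluation of the exponential integral and the symmetrisation via $m(x,y)=a(x,y)/\pi(y)$ are identical in both proofs.
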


\begin{proof}
Let us denote the left and right hand side of \eqref{variational_formula_rel_entropy} as $\mathfrak{L}$ and $\mathfrak{R}$, respectively. Let $\mathcal{X}=S\times S$, $\theta(dx,dy)=\pi(dx)K(x,dy)=(\pi\otimes K)(dx,dy)$, and 
    \begin{align*}
        k(x,y)&=-\log\sqrt{\frac{\mu(x)}{\pi(x)}\frac{\mu(y)}{\pi(y)}}\\
        &=-\frac{1}{2}\left(\log\frac{\mu(x)}{\pi(x)}+\log\frac{\mu(y)}{\pi(y)}\right).
    \end{align*}
Because we assume that $\frac{\mu(x)}{\pi(x)}$ is bounded, i.e., there exists a constant $C>0$ such that  $\frac{\mu(x)}{\pi(x)}< C$ for all $x\in S$, then $k(x,y)>-\log C$. Therefore, the assumptions of Proposition~\ref{prop:variational_formula_rel_entropy} are satisfied.

With this choice of $\mathcal{X}$, $\theta$ and $k$, the left hand side of \eqref{variational_formula_rel_entropy} becomes
    \begin{equation*}
        \begin{split}
            \mathfrak{L} &=  -\log\iint\sqrt{\frac{\mu(x)}{\pi(x)}\frac{\mu(y)}{\pi(y)}}K(x,dy)\pi(dx)\\
        &=-\log\Bigg(\iint\sqrt{\frac{\mu(x)}{\pi(x)}\frac{\mu(y)}{\pi(y)}}a(x,dy)\pi(dx)\\
        &\qquad\quad  + \int \frac{\mu(x)}{\pi(x)}\left(1-\int a(x,dy)\right)\pi(dx)\Bigg).
        \end{split}
        \end{equation*}
In the second quality, we used the definition \eqref{MHtransitionKernel} of the MH transition kernel $K(x,dy)$, considering that the rejection probability is given by \eqref{MHrejection}. Using the definition \eqref{MHacceptance} of $a(x,y)$, we obtain
        \begin{equation*}
            \begin{split}
        \mathfrak{L} &= -\log\Bigg(1-\iint \left(\frac{\mu(x)}{\pi(x)}-\sqrt{\frac{\mu(x)}{\pi(x)}\frac{\mu(y)}{\pi(y)}}\right)\\
        &\qquad \quad \times \min\left\{1,\frac{\pi(y)J(x|y)}{\pi(x)J(y|x)}\right\}J(y|x)dy\pi(dx) \Bigg)\\
        &=-\log\Bigg(1-\iint \sqrt{\mu(x)\pi(y)}\\
        &\qquad \quad \times \left(\sqrt{\mu(x)\pi(y)}-\sqrt{\mu(y)\pi(x)}\right)\\
        &\qquad \quad \times\min\left\{\frac{J(y|x)}{\pi(y)},\frac{J(x|y)}{\pi(x)}\right\}dx\,dy\Bigg),
\end{split}
\end{equation*}
and using the symmetry of the integrand in the previous equation, we get

        \begin{equation}
            \begin{split}
            \label{leftHS}
        &\mathfrak{L}=-\log\Bigg(1-\frac{1}{2}\iint\left(\sqrt{\mu(x)\pi(y)}-\sqrt{\mu(y)\pi(x)}\right)^2\\
        &\qquad\quad\times\min\left\{\frac{J(y|x)}{\pi(y)},\frac{J(x|y)}{\pi(x)}\right\}dx\,dy\Bigg).
        \end{split}
    \end{equation}

    The right hand side of \eqref{variational_formula_rel_entropy} can be rewritten and bounded as 
    \begin{equation*}
    \begin{split}
        \mathfrak{R}&=\inf_{\gamma\in\mathcal{P}(S\times S)}\Bigg[R(\gamma\parallel \pi\otimes K) \\
        &\qquad - \frac{1}{2}\iint\left(\log\frac{\mu(x)}{\pi(x)}+\log\frac{\mu(y)}{\pi(y)}\right)\gamma(dx,dy) \Bigg]\\
        &\le \inf_{\gamma\in A(\mu)}\Bigg[R(\gamma\parallel \pi\otimes K)\\
        &\qquad - \frac{1}{2}\iint\left(\log\frac{\mu(x)}{\pi(x)}+\log\frac{\mu(y)}{\pi(y)}\right)\gamma(dx,dy) \Bigg]\\
        &=\inf_{\gamma\in A(\mu)}\Bigg[R(\gamma\parallel \pi\otimes K)- \int\log\frac{\mu(x)}{\pi(x)}\mu(dx) \Bigg].
    \end{split}
    \end{equation*}
    The inequality in the previous display holds because $A(\mu)$ is a subset of $\mathcal{P}(S\times S)$. The last equality follows from the fact that, since $\gamma(dx,dy)\in A(\mu)$, both marginals are equal to $\mu$. Note that, by the chain rule of the relative entropy (see \cite[Theorem 2.6]{budhiraja2019analysis}), if $\gamma\in A(\mu)$, then $[\gamma]_1(dx)=\mu(dx)$, and
    \begin{align*}
        R(\gamma\parallel \pi\otimes K) = &R(\mu\parallel \pi) \\
        &+ \iint R([\gamma]_{2|1}(\cdot|x)\parallel K(x,\cdot))\mu(dx).
    \end{align*}
    Therefore,
    \begin{equation}
        \label{rightHS}
        \begin{split}
             \mathfrak{R}&\le \inf_{\gamma\in A(\mu)}\iint R([\gamma]_{2|1}(\cdot|x)\parallel K(x,\cdot))\mu(dx)\\
        &=\inf_{q\in\mathcal{Q}(\mu)}\iint R(q(x,\cdot)\parallel K(x,\cdot))\mu(dx)\\
        &=I(\mu),
        \end{split}
    \end{equation}
    where we considered that, for $\gamma\in A(\mu)$, the conditional probability distributions $[\gamma]_{2|1}(\cdot|x)$ correspond to transition kernels $q\in\mathcal{Q}(\mu)$.
    
    Recalling that $\mathfrak{L}=\mathfrak{R}$, using \eqref{leftHS} and \eqref{rightHS}, we conclude that 
    \begin{equation*}
    \begin{split}
         &I(\mu)\\
         &\ge 
        -\log\left(1-\frac{1}{2}\iint \Bigg(\sqrt{\mu(x)\pi(y)}-\sqrt{\mu(y)\pi(x)}\right)^2\\
        &\qquad\times \min\left\{\frac{J(y|x)}{\pi(y)},\frac{J(x|y)}{\pi(x)}\right\}dx\,dy\Bigg).
    \end{split}
    \end{equation*}
\end{proof}

\section{Tuning Metropolis-Hastings algorithms via the rate function lower bounds}
\label{sec:tuning}
In this section, we illustrate how the bounds determined in Section~\ref{sec:bounds} can be used to tune algorithms of Metropolis-Hastings type. In particular, we will describe three schemes to perform the tuning task. In the following section we show an application of one of these schemes to the Independent Metropolis-Hastings algorithm.

Suppose that we want to sample from a target distribution $\pi(dx)$ with a proposal $J_p(y|x)dy$ depending on some hyperparameter $p\in P$ in some set $P$. In general, different choices of $p$ in the proposal $J_p(y|x)dy$ determine different Metropolis-Hastings transition kernels $K_p(x,dy)$. These, in turn, lead to Markov chains with different rate functions $I_p(\mu)$. A relevant question that arises is the following:
\emph{What hyperparameter $p$ should we choose in order to obtain the algorithm with fastest convergence?}

In light of the large deviation principle described in Section~\ref{sec:ldp} and considering Proposition~\ref{prop:min_on_sphere}, a possible way to answer this question is by implementing the tuning scheme described in Algorithm~\ref{alg:tuning_scheme_1}. Recall that $\partial B_\varepsilon(\pi)$ indicates the ball (in the Lévy-Prohorov metric) of radius $\varepsilon$, centered at $\pi$.

\begin{algorithm}
\caption{Tuning scheme for MH based on optimizing over $\partial B_\varepsilon(\pi)$}
\label{alg:tuning_scheme_1}
\begin{algorithmic}[1]
\Require Given a target distribution $\pi$.
\Require Given a family of proposal probabilities $\mathcal{J}=\{J_p(dy|x), \;p\in P\}$.
\State Fix $\varepsilon>0$.
 \State Compute
 \begin{align}
\label{optimize_p}
    p_*=\argmax_{p\in P} \min_{\mu\in \partial B_\varepsilon(\pi)}I_p(\mu).
\end{align}
\State Run the MH algorithm with proposal probability $J_{p_*}(dy|x)$.
\end{algorithmic}
\end{algorithm}

The rate function $I(\cdot)$ is given by the representations \eqref{old:gamma}, \eqref{oldRF}, \eqref{newRF} and \eqref{newRFmeasurable}, which are defined as optimization problems. As a result, \eqref{optimize_p} consists of three optimization problems, and the tuning scheme in Algorithm~\ref{alg:tuning_scheme_1} is therefore very demanding. 

In order to tune MH algorithms, instead of solving the complex problem \eqref{optimize_p}, we can utilize the lower bounds from Section \ref{sec:bounds} to obtain an \textit{indication} of what the best hyperparameter could be. Specifically, we can implement a tuning scheme as described in Algorithm~\ref{alg:tuning_scheme_2}.

\begin{algorithm}
\caption{Tuning scheme for MH based on maximizing a rate function lower bound $\ell_p(\mu)$}
\label{alg:tuning_scheme_2}
\begin{algorithmic}[1]
\Require Given a target distribution $\pi$.
\Require Given a family of proposal probabilities $\mathcal{J}=\{J_p(dy|x), \;p\in P\}$.
\State Determine a rate function lower bound $\ell_p(\cdot)$.
\State Fix a probability measure $\mu \in\mathcal{P}(S)\setminus\{\pi\}$.
 \State Compute
\begin{align}
\label{simpler_opt_p}
     p_*=\argmax_{p\in P} \ell_p(\mu).
\end{align}
\State Run the MH algorithm with proposal probability $J_{p_*}(dy|x)$.
\end{algorithmic}
\end{algorithm}

In the tuning scheme in Algorithm~\ref{alg:tuning_scheme_2}, by using the lower bound $\ell_p(\mu)$ in place of $I(\mu)$, we eliminate the optimization required in the computation of the rate function. In addition, by fixing a measure $\mu$, we remove the minimization over $\partial B_\varepsilon(\pi)$. The resulting tuning scheme (Algorithm~\ref{alg:tuning_scheme_2}) thus consists of only optimizing over $p\in P$, and therefore is easier to implement compared to the tuning scheme in Algorithm~\ref{alg:tuning_scheme_1}.

The reason why we maximize the lower bound over $p\in P$, instead of the upper bound, is that in so doing we guarantee that the algorithm associated with $p_*$ converges at a rate that is \textit{at least} the one indicated by the lower bound. 

In Section~\ref{sec:IMH} we will apply the tuning scheme in Algorithm~\ref{alg:tuning_scheme_2} to optimize hyperparameters in the IMH algorithm.

Despite being simple to implement, the tuning scheme in Algorithm~\ref{alg:tuning_scheme_2} entails a major issue: for most MH algorithms the choice of $\mu$ will affect the value of the optimizer, that is, different test measures $\mu_1\neq\mu_2$ lead, in general, to different optimal hyperparameters $p_{*,1}\neq p_{*,2}$. An improvement to this approach, that partially addresses this problem, is to maximize the lower bound taking into consideration $N>1$ test measures $\mu_i, i=1,\dots,N$, satisfying $\text{dist}_{LP}(\mu_i,\pi)\approx \varepsilon$. This is formalized in the tuning scheme in Algorithm~\ref{alg:tuning_scheme_3}, which can be considered as an intermediate scheme between the previous two.

\begin{algorithm}
\caption{Tuning scheme for MH based on maximizing a rate function lower bound $\ell_p(\cdot)$ considering multiple test measures $\mu_i$}
\label{alg:tuning_scheme_3}
\begin{algorithmic}[1]
\Require Given a target distribution $\pi$.
\Require Given a family of proposal probabilities $\mathcal{J}=\{J_p(dy|x), \;p\in P\}$.
\State Determine a rate function lower bound $\ell_p(\cdot)$.
\State Fix $\varepsilon>0$.
\State Fix $N$ test probability measures $\mu_i \in\mathcal{P}(S)\setminus\{\pi\},i=1,\dots,N$, satisfying $\text{dist}_{LP}(\mu_i,\pi)\approx 
    \varepsilon$.
 \State Compute
\begin{align}
\label{medium_simpler_opt_p}
     p_*=\argmax_{p\in P} \min_{i=1,\dots,N}\ell_p(\mu_i).
\end{align}
\State Run the MH algorithm with proposal probability $J_{p_*}(dy|x)$.
\end{algorithmic}
\end{algorithm}

In fact, we could require that all $\mu_i$ satisfy $\text{dist}_{LP}(\mu_i,\pi)\ge \varepsilon$, instead of $\text{dist}_{LP}(\mu_i,\pi)\approx \varepsilon$. However, this could be disadvantageous in computational applications. Indeed, as shown in Proposition\ref{prop:min_on_sphere}, the infimum is to be found on the boundary $\partial B_\varepsilon(\pi)$ of the ball. Consequently, if for some fixed $i$ the measure $\mu_i$ has a distance from $\pi$ much larger compared to the other test measures $\mu_j, \,j\neq i$, i.e., 
\begin{align*}
    \text{dist}_{LP}(\mu_i,\pi) \gg \text{dist}_{LP}(\mu_j,\pi),\qquad j\neq i,
\end{align*}
this could result in a much larger lower bound, that is,
\begin{align*}
    \ell_p(\mu_i) \gg \ell_p(\mu_j),\qquad j\neq i.
\end{align*}
As a result, the minimum in \eqref{medium_simpler_opt_p} will hardly be achieved at $\mu_i$. This undermines the computational effort put in the computation of $\ell_p(\mu_i)$. In conclusion, in order to avoid the waste of computational resources, we recommend implementing tuning scheme in Algorithm~\ref{alg:tuning_scheme_3} with all $\mu_i$ satisfying $\text{dist}_{LP}(\mu_i,\pi)\approx \varepsilon$.

To implement the tuning scheme in Algorithm~\ref{alg:tuning_scheme_3}, we need a (programming) function to approximate the Lévy-Prohorov distance between probability measures. An important direction for future research involves the design of an efficient algorithm to perform this.

Although the approximation of the Lévy-Prohorov metric can make this scheme very costly, the tuning scheme in Algorithm~\ref{alg:tuning_scheme_3} is to be preferred over that in Algorithm~\ref{alg:tuning_scheme_2} when tuning rather complex algorithms, e.g., MALA and HMC, because it is a closer approximation of the more precise tuning scheme in Algorithm~\ref{alg:tuning_scheme_1}.

With the three tuning schemes described here we showed how to tune hyperparameters in algorithms of MH type using the large deviation principle from \cite{milinanni2024a}, and we illustrated how this can be implemented in practice. However, the three tuning schemes in Algorithms~\ref{alg:tuning_scheme_1}-\ref{alg:tuning_scheme_3} are only some of the possible ways to use the large deviations result to optimize MH algorithms, and variations to the three schemes are possible.

\section{An illustrative example: Tuning the Independent Metropolis-Hastings algorithm}
\label{sec:IMH}
In this section we present an application of the tuning scheme in Algorithm~\ref{alg:tuning_scheme_2} to the Independent Metropolis-Hastings (IMH) algorithm. Although in Section~\ref{sec:tuning} we argued that the tuning scheme in Algorithm~\ref{alg:tuning_scheme_3} is preferable, its implementation requires an algorithm to compute the Lévy-Prokhorov distance, which we leave for future research.

The Independent Metropolis-Hastings (IMH) is an algorithm of MH type where the proposal distribution $J(y|x)dy$ does not depend on the current state $x$ of the chain. Therefore, we will drop the dependency on $x$, and denote the proposal density as $\hat J(y)$.

Let us consider the IMH algorithm with target distribution $\pi\sim\mathcal{N}(0,1)$ and proposal $\hat J \sim\mathcal{N}(m,s^2)$ with hyperparameters $m\in\mathbb{R}$ and $s\in\mathbb{R}_+$. Our goal is to determine the values of the hyperparameters $m$ and $s$ that give rise to the IMH algorithm with fastest convergence. By the tuning scheme in Algorithm~\ref{alg:tuning_scheme_2}, we determine the optimal hyperparameter $(m_*,s_*)$ by computing
\begin{align}
\label{opt_ms}
    ( m_*,  s_*) = \argmax_{(m,s)\in \mathbb{R}\times\mathbb{R}_+} \ell_{(m,s)}(\mu).
\end{align}
We apply the tuning scheme several times, considering different lower bounds $\ell_{(m,s)}(\cdot)$ of the rate function and different probability measures $\mu$. More specifically, as $\ell_{(m,s)}(\mu)$ we consider both lower bounds $\ell_{1,(m,s)}(\mu)$ from Proposition~\ref{prop:lower_bound_derivative} and $\ell_{2,(m,s)}(\mu)$ from Proposition~\ref{prop:variational_formula_rel_entropy}, and we determine the optimal hyperparameter $(m_*,s_*)$ by solving \eqref{opt_ms} numerically $6$ times, by testing the following $6$ different probability measures:
\begin{itemize}[itemsep=1ex, leftmargin=*, label=\tiny$\bullet$]
    \item $\mu_1\sim\mathcal{N}(1,2^2) $,
    \item $\mu_2\sim \text{Weib}(3,2)$, 
    \item $\mu_3\sim\text{Unif}([0,1])$,
    \item $\mu_4\sim \frac{1}{2}\mathcal{N}(5,2^2)+\frac{1}{2}\mathcal{N}(-3,1^2)$,
    \item $\mu_5\sim \text{Exp}(1)$,
    \item $\mu_6\sim \text{Gamma}(3,2)$. 
\end{itemize}
For all measures $\mu_i,i=1,\dots,6$, we evaluated the lower bounds $\ell_{1,(m,s)}(\mu_i)$ and $\ell_{2,(m,s)}(\mu_i)$ for values $(m,s)$ in the rectangle $[-3,3]\times[0.2,3]$ (more specifically, on a grid with increments $0.1$ in both directions). To carry out these computations, we used the R programming language. Interestingly, for all $i=1,\dots,6$, the maximum value of either lower bounds was achieved in correspondence of $(m,s)=(0,1)$. Figures~\ref{fig:LB1} and~\ref{fig:LB2} (generated in R) show contour plots of the lower bounds $\ell_{1,(m,s)}(\mu_i)$ and $\ell_{2,(m,s)}(\mu_i)$, respectively, for $i=1,\dots,6$, and $(m,s)\in [-3,3]\times[0.2,3]$. The maxima are indicated with dashed white lines.

\begin{figure*}
    \centering
    \begin{minipage}{0.32\textwidth}
        \centering
        \includegraphics[width=\textwidth]{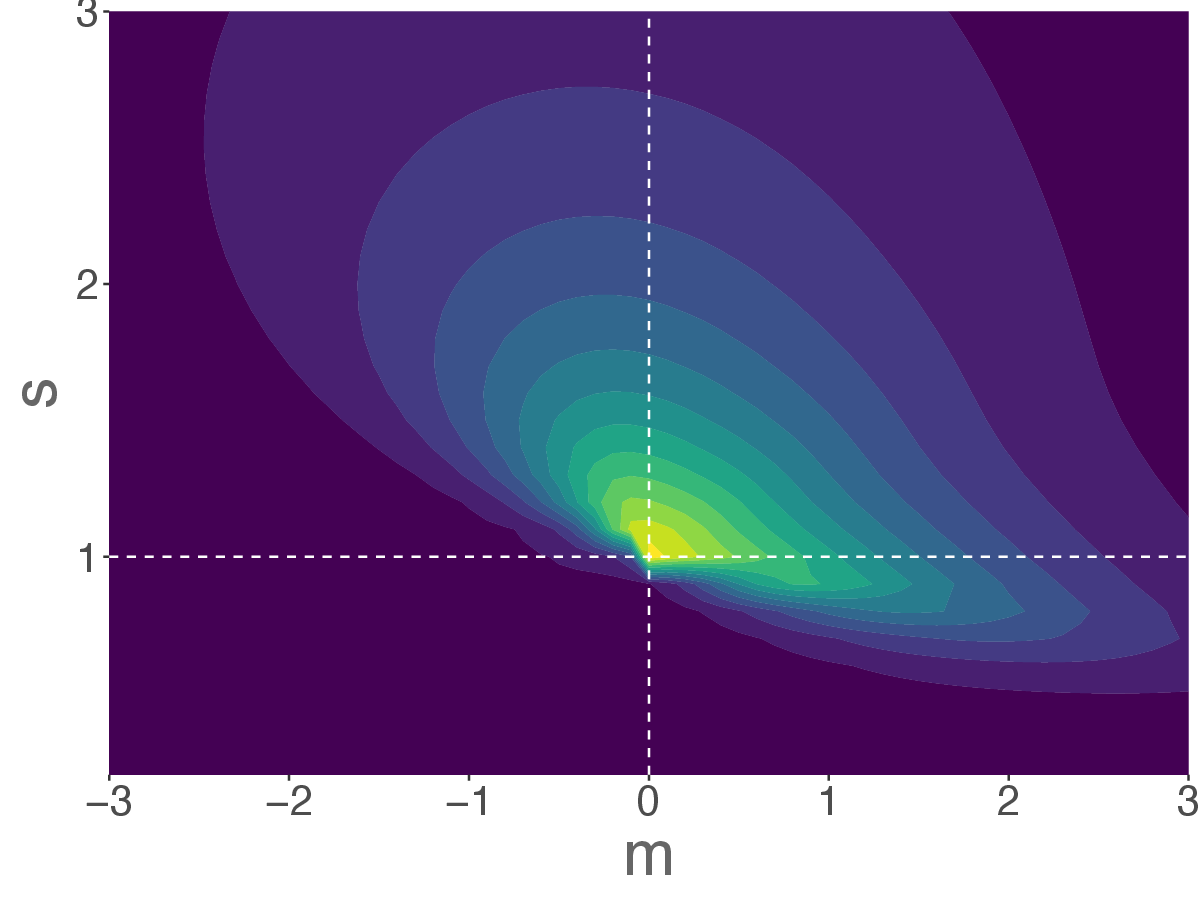}
        \subcaption{$\mu_1\sim\mathcal{N}(1,2^2) $}
    \end{minipage}
    \begin{minipage}{0.32\textwidth}
        \centering
        \includegraphics[width=\textwidth]{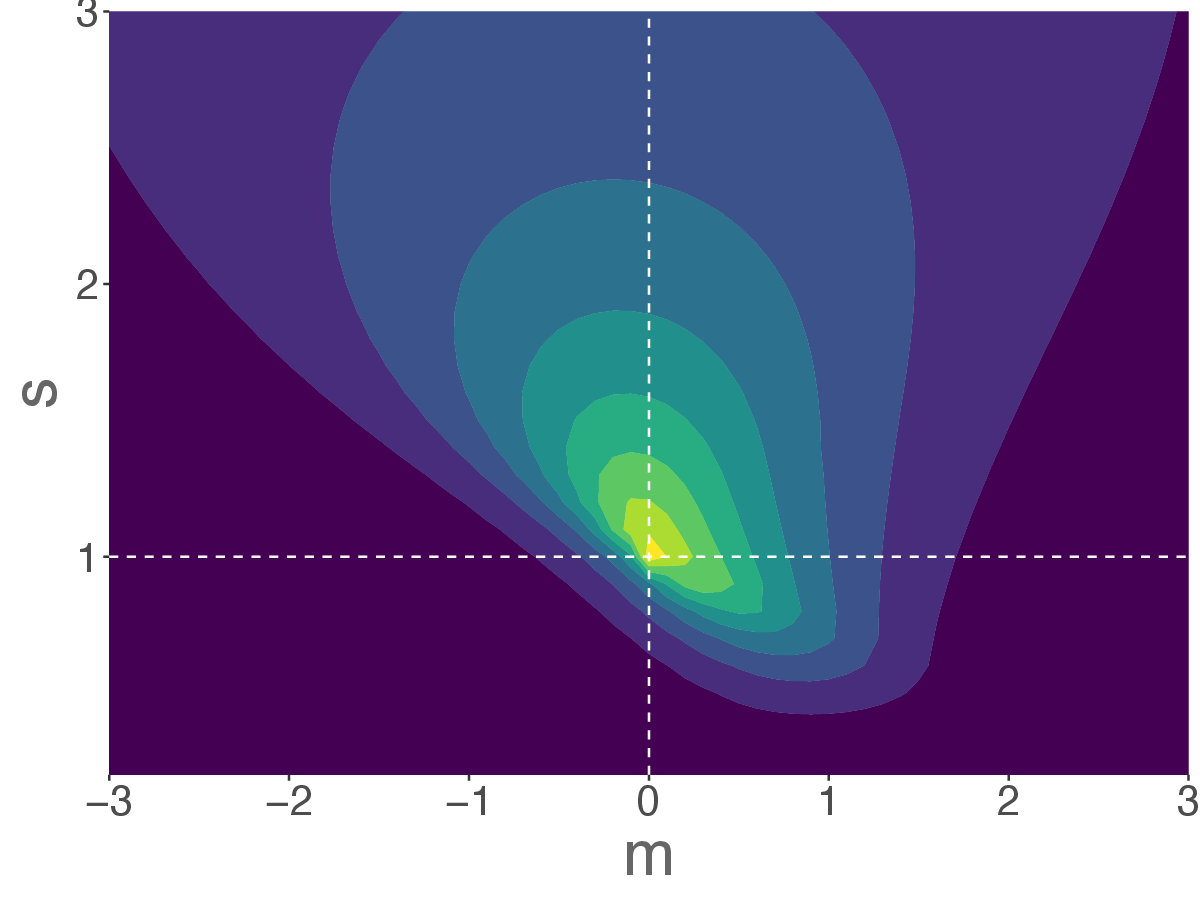}
        \subcaption{$\mu_2\sim \text{Weib}(3,2)$}
    \end{minipage}
    \begin{minipage}{0.32\textwidth}
        \centering
        \includegraphics[width=\textwidth]{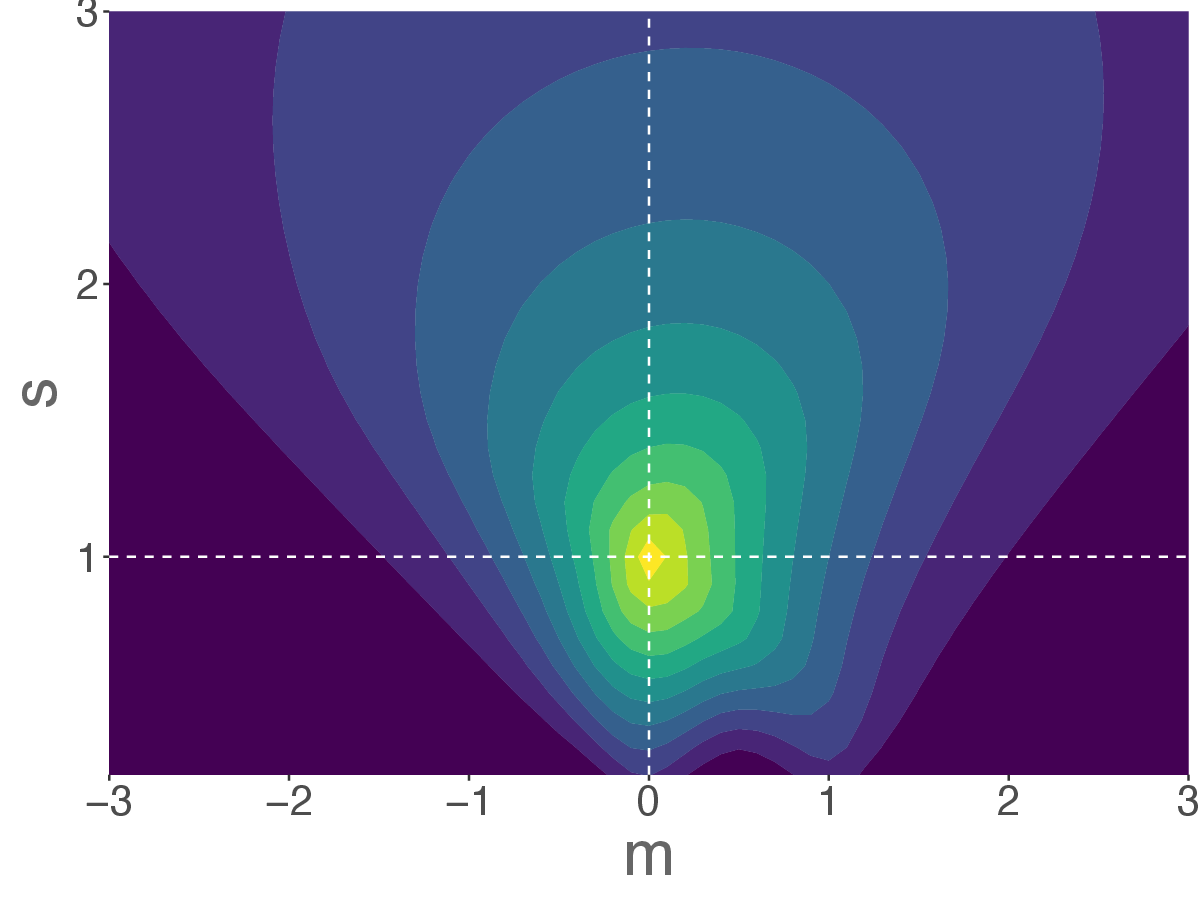}
        \subcaption{$\mu_3\sim\text{Unif}([0,1])$}
    \end{minipage}

    \vspace{0.5cm}

    \begin{minipage}{0.32\textwidth}
        \centering
        \includegraphics[width=\textwidth]{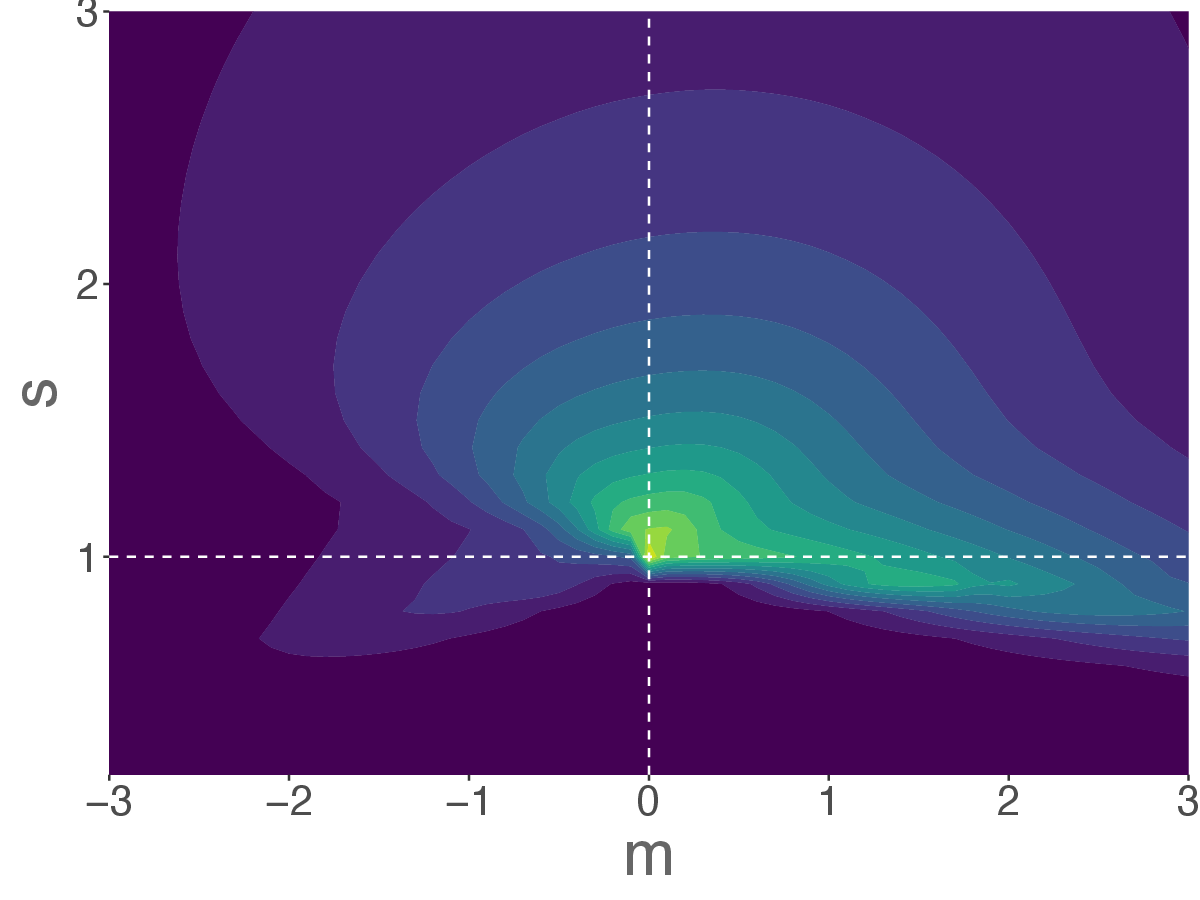}
        \subcaption{\hspace{-1mm}$\mu_4\hspace{-.5mm}\sim\hspace{-.5mm}\frac{1}{2}(\mathcal{N}(5,2^2)+\mathcal{N}(-3,1))$}
    \end{minipage}
    \begin{minipage}{0.32\textwidth}
        \centering
        \includegraphics[width=\textwidth]{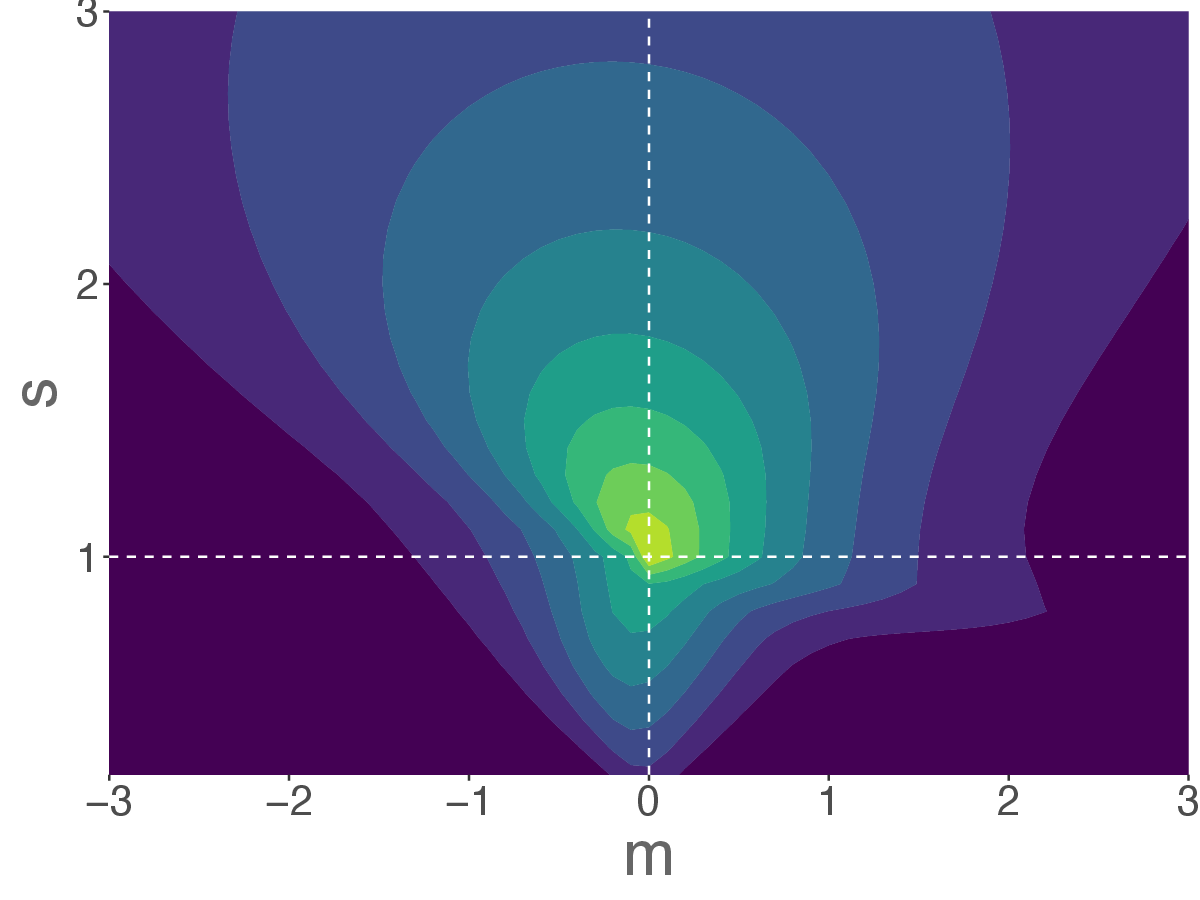}
        \subcaption{$\mu_5\sim \text{Exp}(1)$}
    \end{minipage}
    \begin{minipage}{0.32\textwidth}
        \centering
        \includegraphics[width=\textwidth]{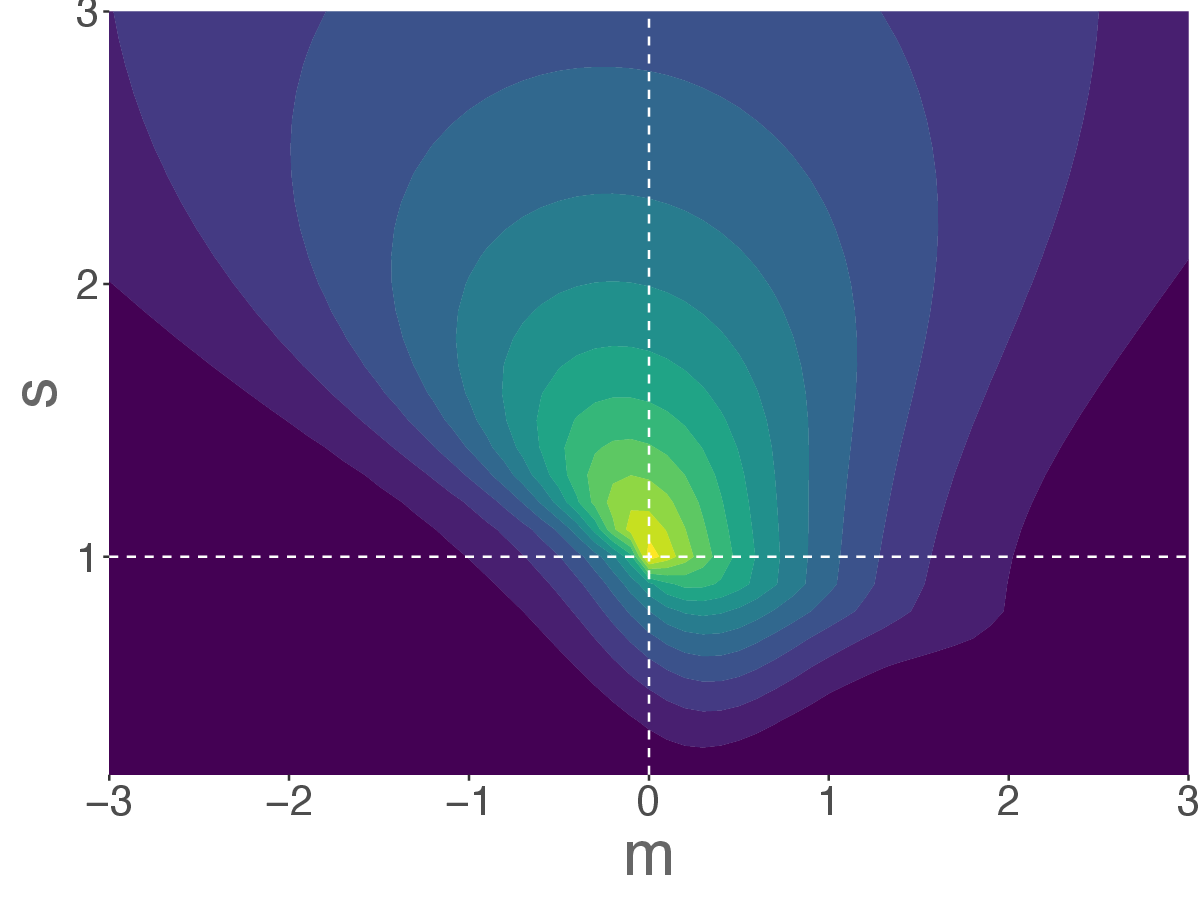}
        \subcaption{$\mu_6\sim \text{Gamma}(3,2)$}
    \end{minipage}
    
    \caption{Contour plot of the lower bound from Proposition~\ref{prop:lower_bound_derivative} as a function of $(m,s)$, for different choices of $\mu$. The $x$ and $y$ axes represent values of $m$ and $s$, respectively, and white dashed lines indicate the maximum of the lower bound, which is achieved at $(m,s)=(0,1)$ for all $\mu_i$, $i=1,\dots,6$. The plots were generated using the R programming language.}
\label{fig:LB1}
\end{figure*}

\begin{figure*}
    \centering
    \begin{minipage}{0.32\textwidth}
        \centering
        \includegraphics[width=\textwidth]{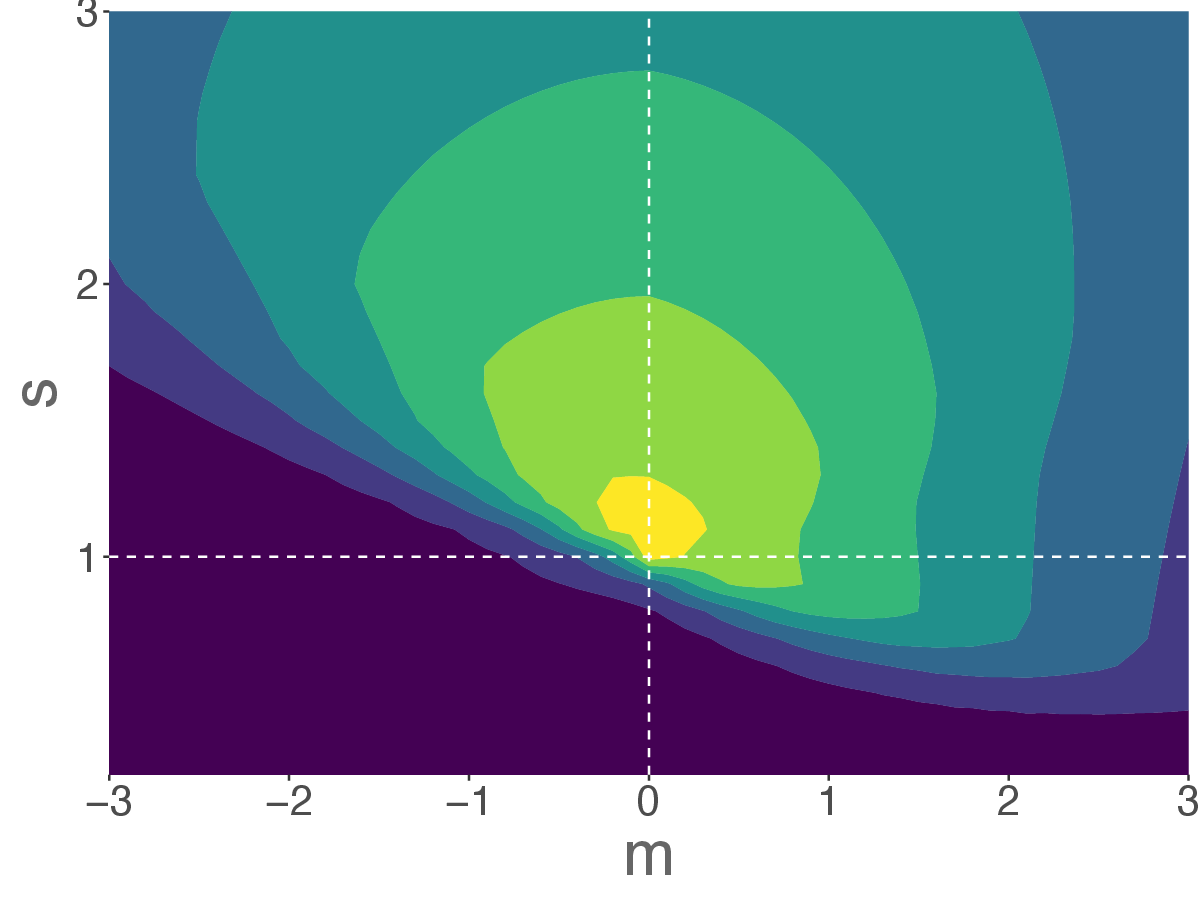}
        \subcaption{$\mu_1\sim\mathcal{N}(1,2^2) $}
    \end{minipage}
    \begin{minipage}{0.32\textwidth}
        \centering
        \includegraphics[width=\textwidth]{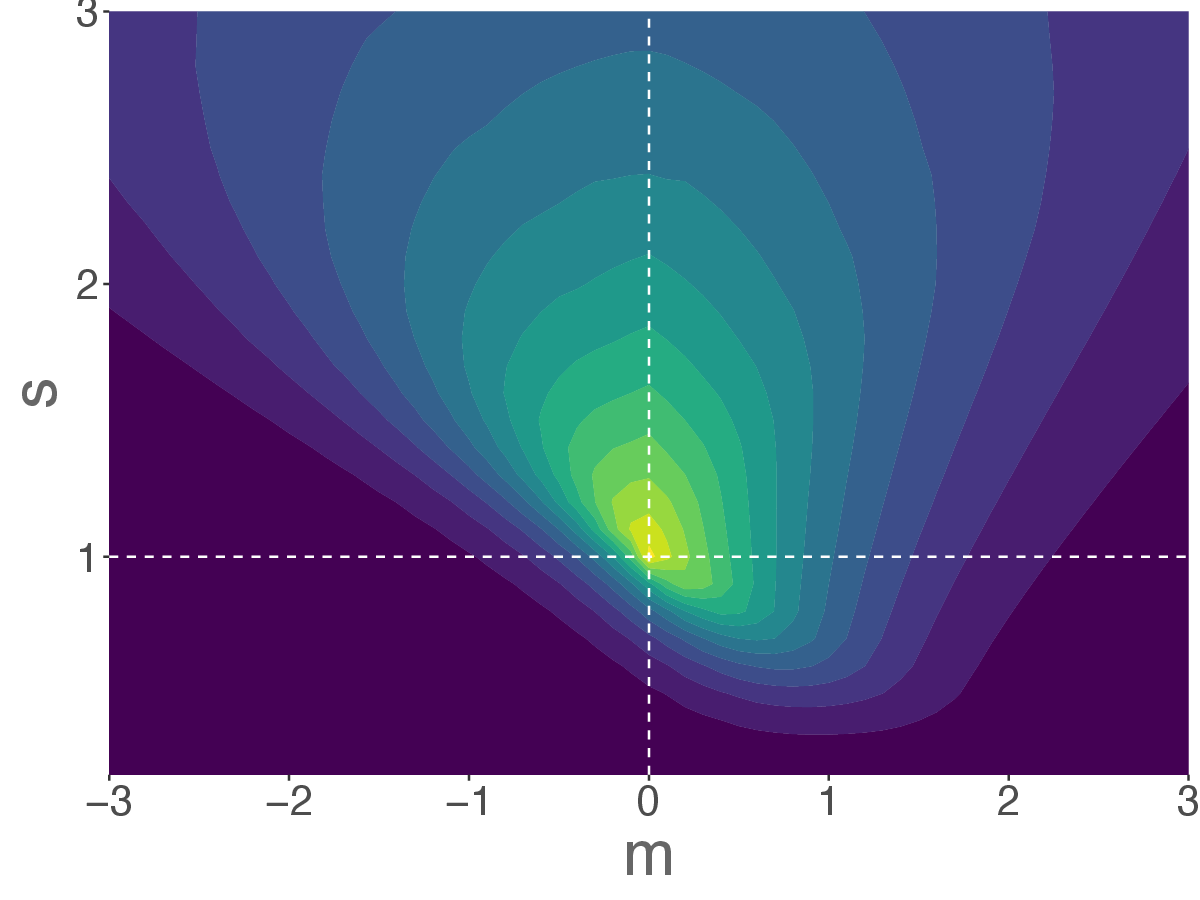}
        \subcaption{$\mu_2\sim \text{Weib}(3,2)$}
    \end{minipage}
    \begin{minipage}{0.32\textwidth}
        \centering
        \includegraphics[width=\textwidth]{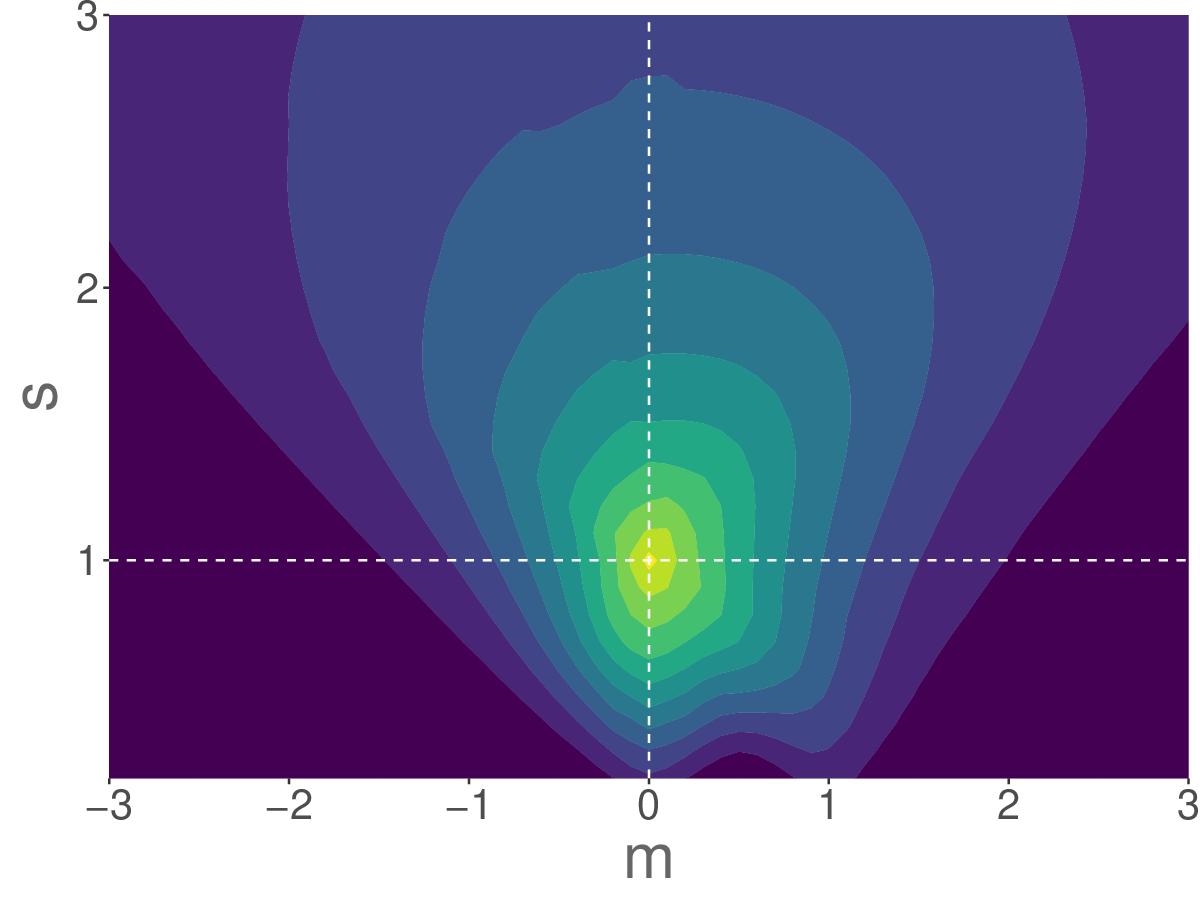}
        \subcaption{$\mu_3\sim\text{Unif}([0,1])$}
    \end{minipage}

    \vspace{0.5cm}

    \begin{minipage}{0.32\textwidth}
        \centering
        \includegraphics[width=\textwidth]{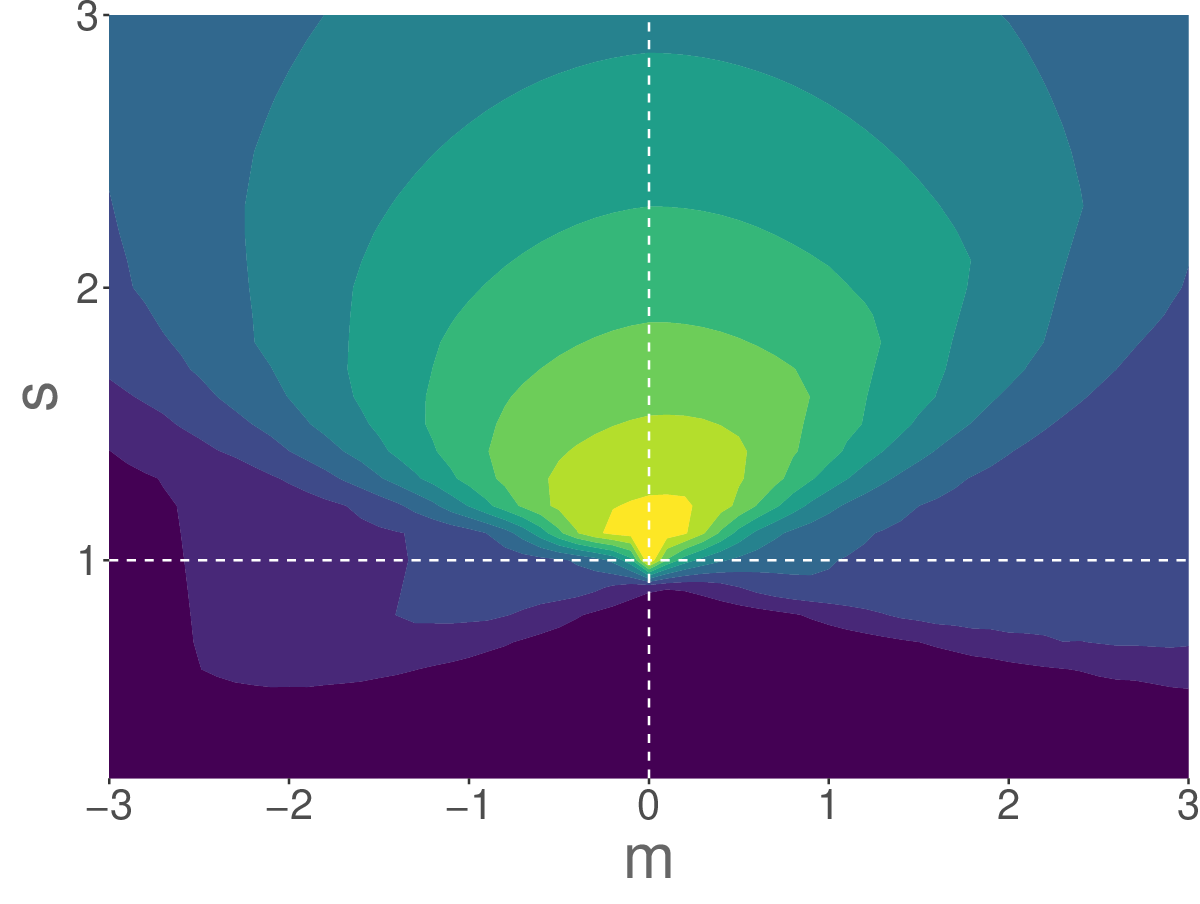}
        \subcaption{\hspace{-1mm}$\mu_4\hspace{-.5mm}\sim\hspace{-.5mm}\frac{1}{2}(\mathcal{N}(5,2^2)+\mathcal{N}(-3,1))$}
    \end{minipage}
    \begin{minipage}{0.32\textwidth}
        \centering
        \includegraphics[width=\textwidth]{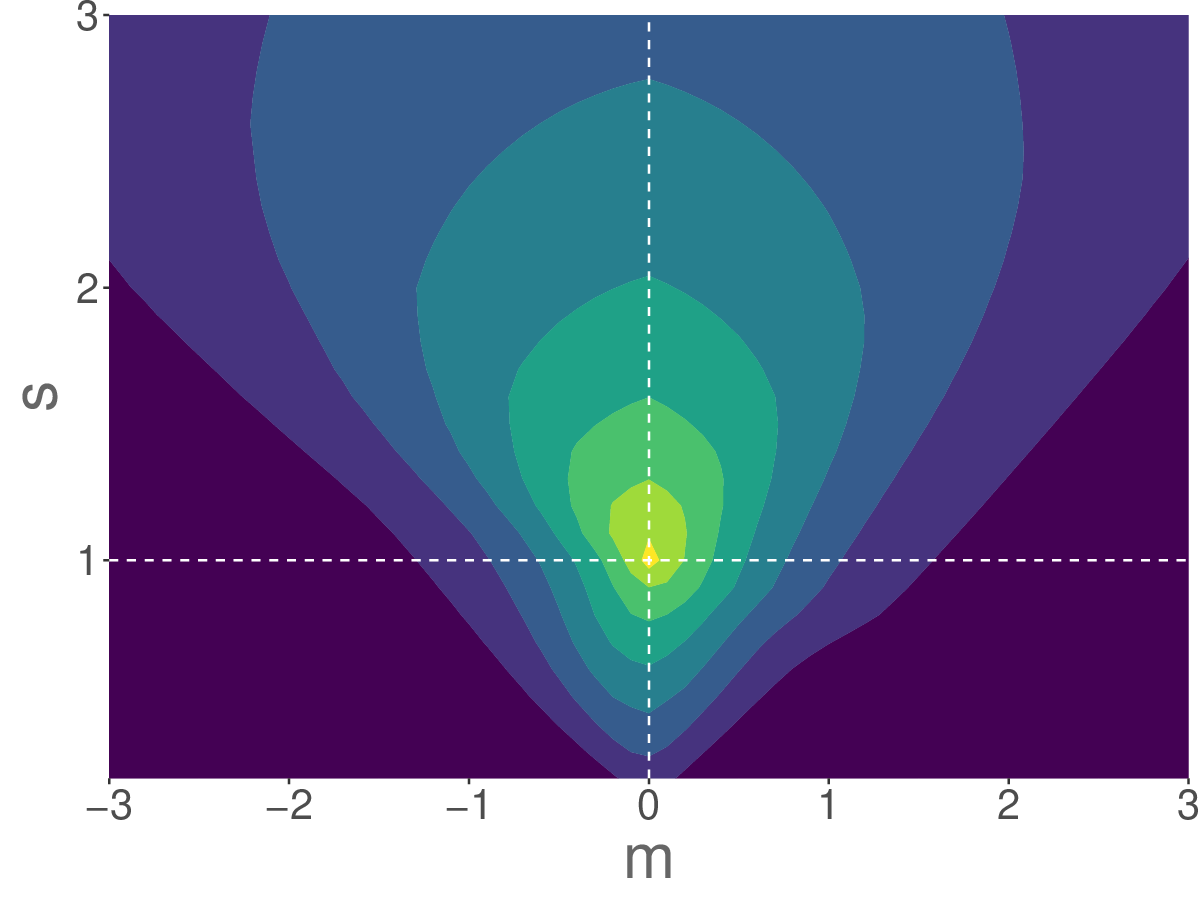}
        \subcaption{$\mu_5\sim \text{Exp}(1)$}
    \end{minipage}
    \begin{minipage}{0.32\textwidth}
        \centering
        \includegraphics[width=\textwidth]{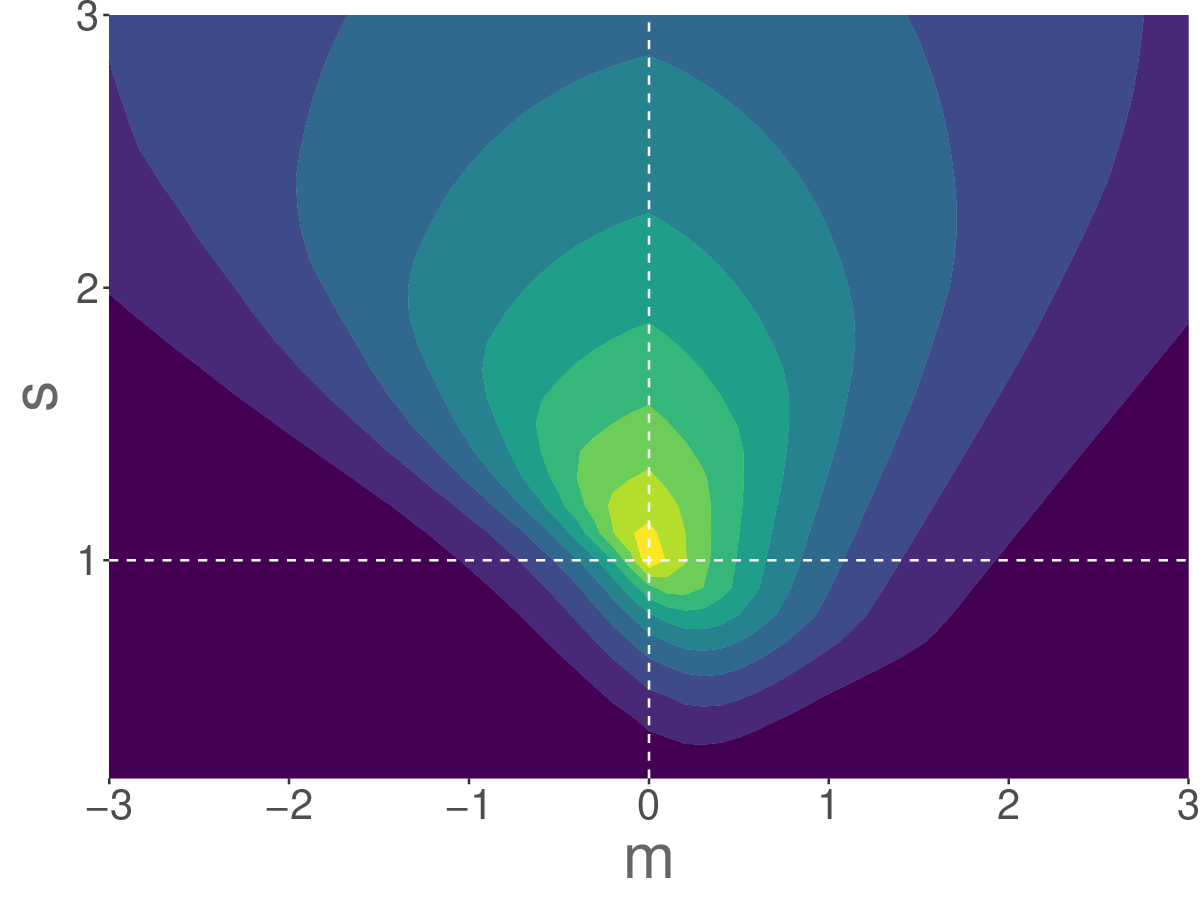}
        \subcaption{$\mu_6\sim \text{Gamma}(3,2)$}
    \end{minipage}
    
    \caption{Contour plot of the lower bound from Proposition~\ref{prop:variational_formula_rel_entropy} as a function of $(m,s)$, for different choices of $\mu$. The $x$ and $y$ axes represent values of $m$ and $s$, respectively, and white dashed lines indicate the maximum of the lower bound, which is achieved at $(m,s)=(0,1)$ for all $\mu_i$, $i=1,\dots,6$. The plots were generated using the R programming language.}
\label{fig:LB2}
\end{figure*}

This numerical results indicate that, among the independent proposal distributions $\hat J_{(m,s)} \sim\mathcal{N}(m,s^2)$, the proposal that determines the IMH algorithm with fastest convergence to the target $\pi\sim\mathcal{N}(0,1)$ is $\hat J_{(0,1)}\sim\mathcal{N}(0,1)$. This result is not surprising. In fact, when $(m,s)=(0,1)$, the proposal has the same distribution as the target $\pi$. Consequently, the MH acceptance probability is $\min\left\{1,\frac{\pi(y)J(x|y)}{\pi(x)J(y|x)}\right\}=1$, i.e., all proposals are accepted. This means that when $\hat J=\pi$, the IMH algorithm is equivalent to sampling directly from the target.

We also investigated the upper bounds obtained in Propositions~\ref{prop:upper_bound_indep} and~\ref{prop:upper_bound_MH}. However, we did not identify any interesting pattern. Figure~\ref{fig:UB} (generated in R) shows contour plots of the upper bound from Proposition~\ref{prop:upper_bound_MH}, for $i=1,\dots,6$, and $(m,s)\in [-3,3]\times[0.2,3]$. Nevertheless, upper bounds carry useful information. In fact, by means of both a lower and an upper bound we identify a closed interval containing the rate function, and therefore can provide quantitative estimates for the rate of convergence of the algorithm.
\begin{figure*}
    \centering
    \begin{minipage}{0.32\textwidth}
        \centering
        \includegraphics[width=\textwidth]{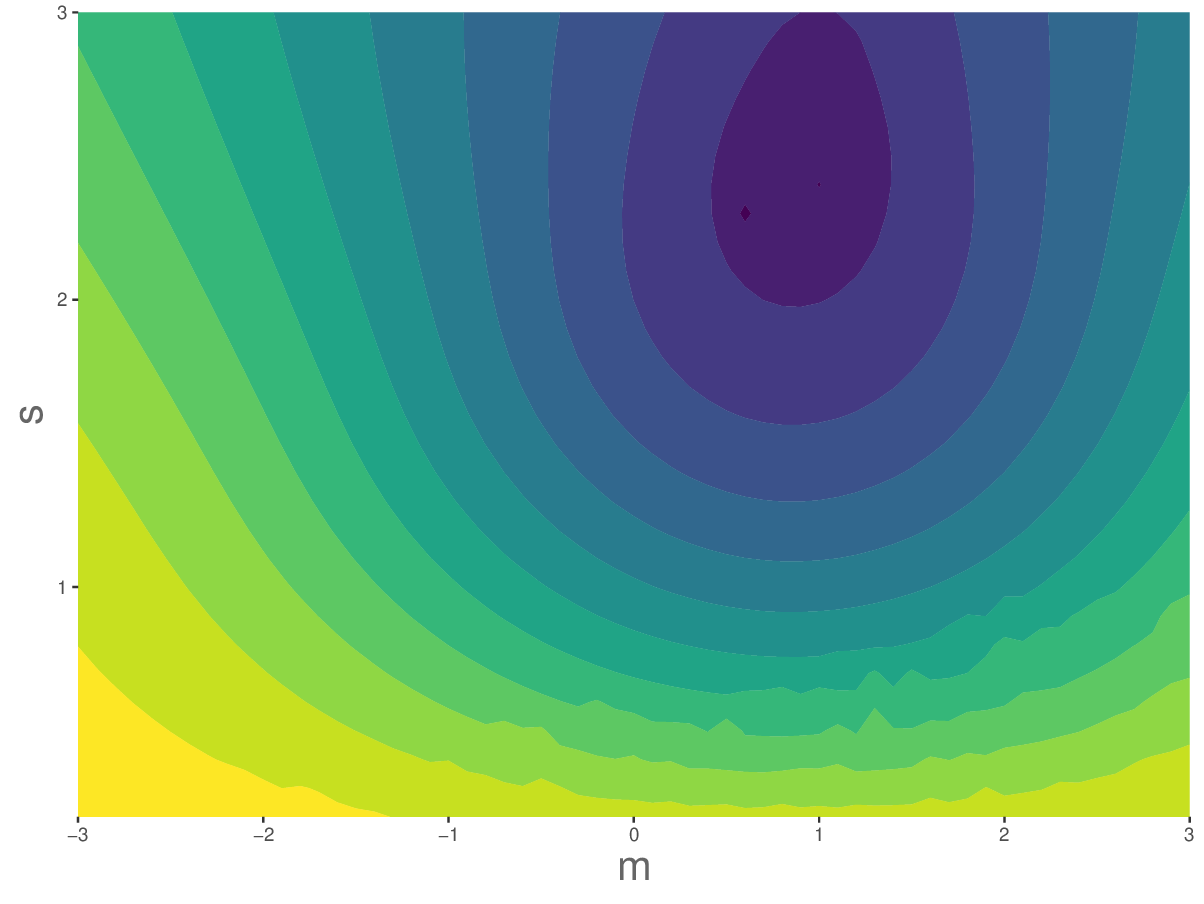}
        \subcaption{$\mu_1\sim\mathcal{N}(1,2^2) $}
    \end{minipage}
    \begin{minipage}{0.32\textwidth}
        \centering
        \includegraphics[width=\textwidth]{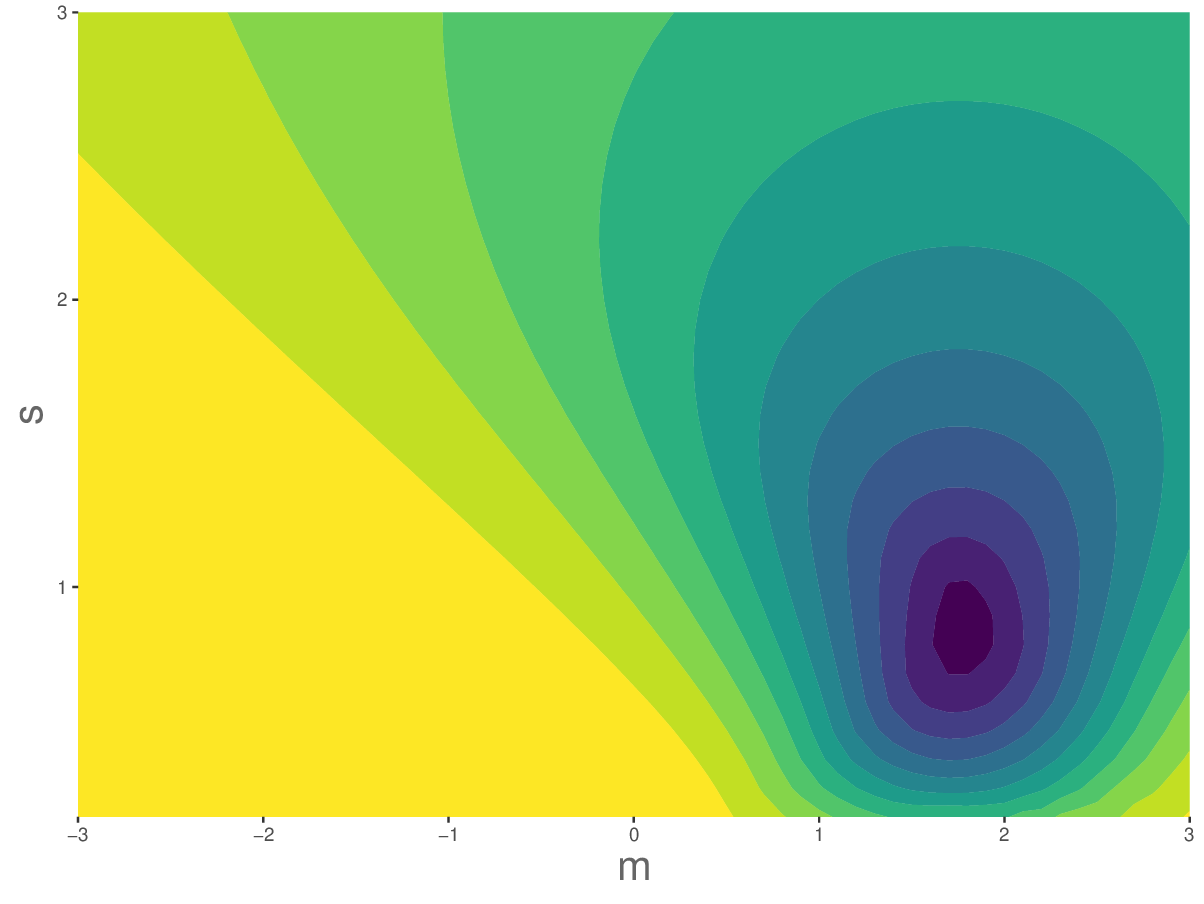}
        \subcaption{$\mu_2\sim \text{Weib}(3,2)$}
    \end{minipage}
    \begin{minipage}{0.32\textwidth}
        \centering
        \includegraphics[width=\textwidth]{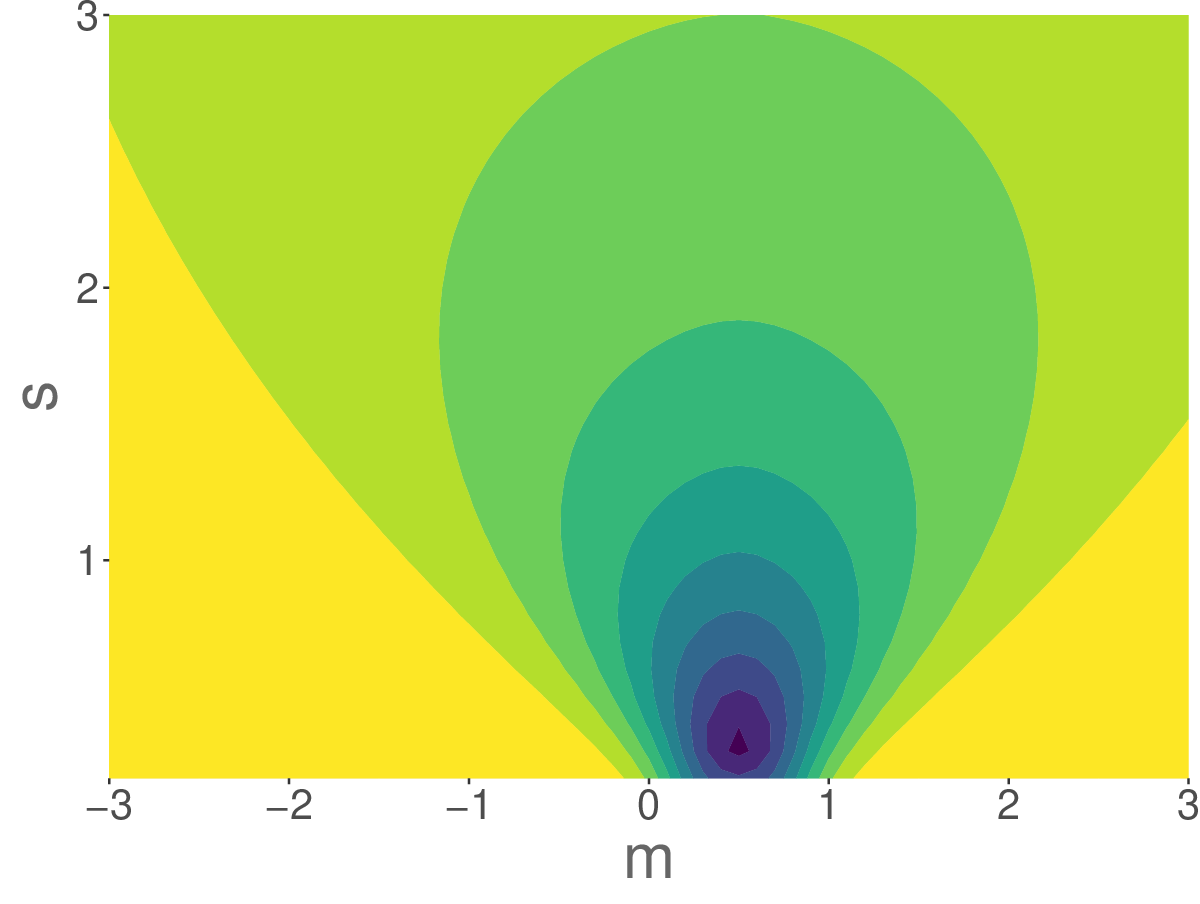}
        \subcaption{$\mu_3\sim\text{Unif}([0,1])$}
    \end{minipage}

    \vspace{0.5cm}

    \begin{minipage}{0.32\textwidth}
        \centering
        \includegraphics[width=\textwidth]{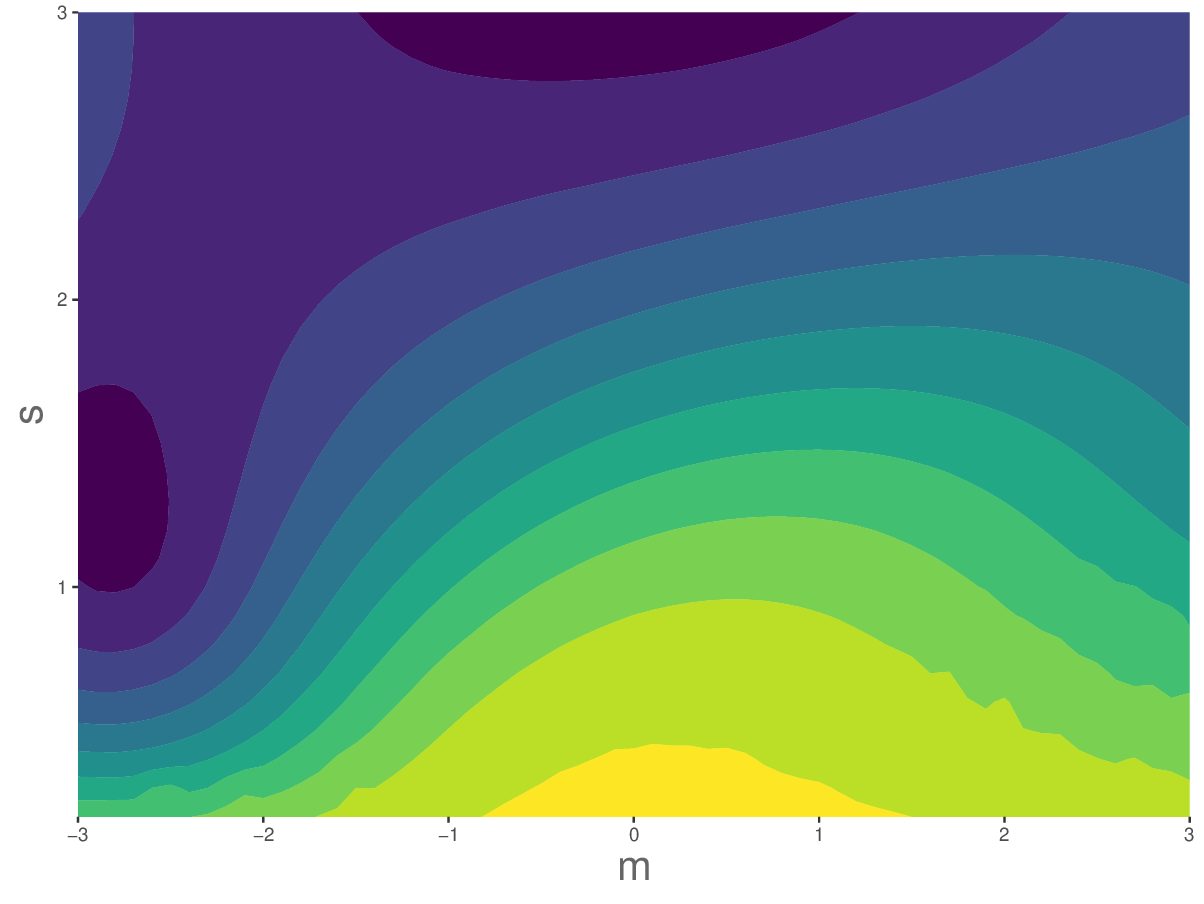}
        \subcaption{\hspace{-1mm}$\mu_4\hspace{-.5mm}\sim\hspace{-.5mm}\frac{1}{2}(\mathcal{N}(5,2^2)+\mathcal{N}(-3,1))$}
    \end{minipage}
    \begin{minipage}{0.32\textwidth}
        \centering
        \includegraphics[width=\textwidth]{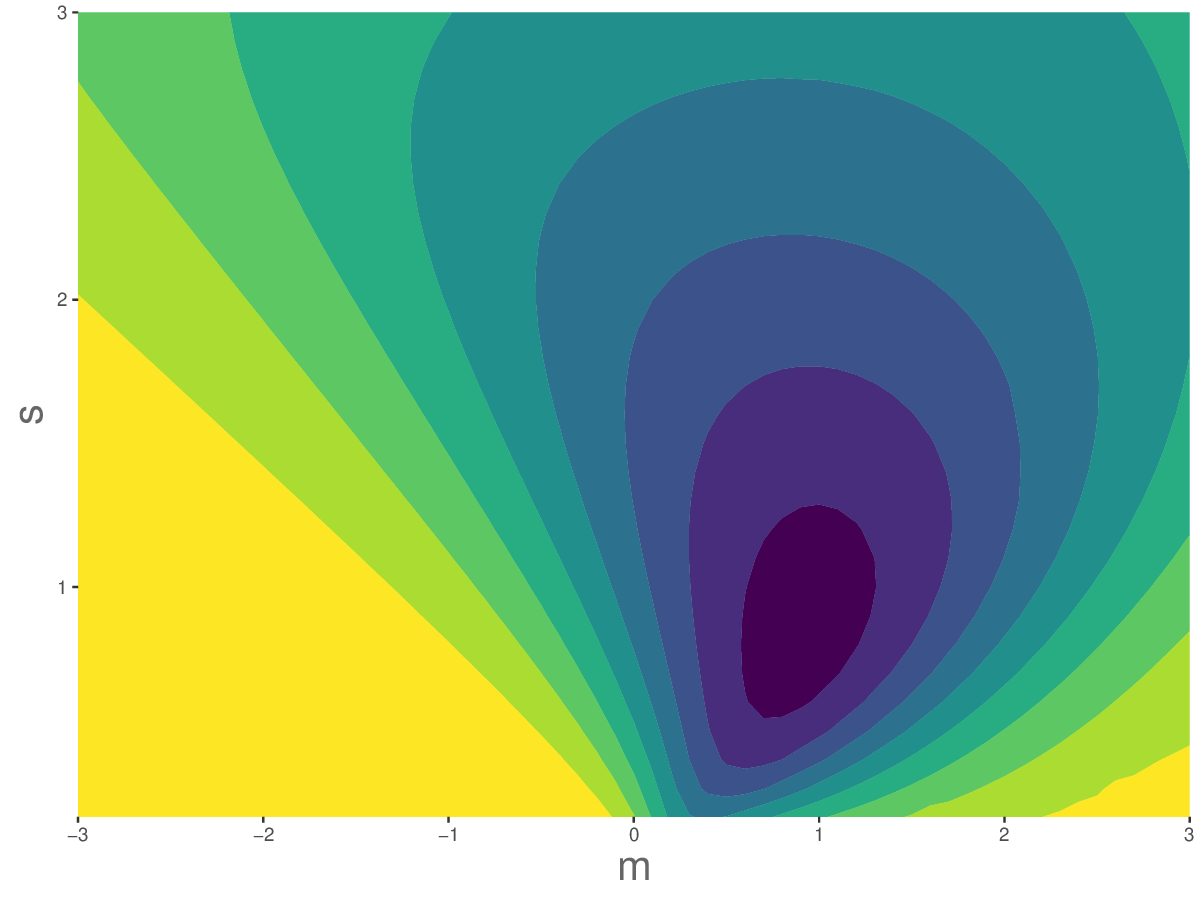}
        \subcaption{$\mu_5\sim \text{Exp}(1)$}
    \end{minipage}
    \begin{minipage}{0.32\textwidth}
        \centering
        \includegraphics[width=\textwidth]{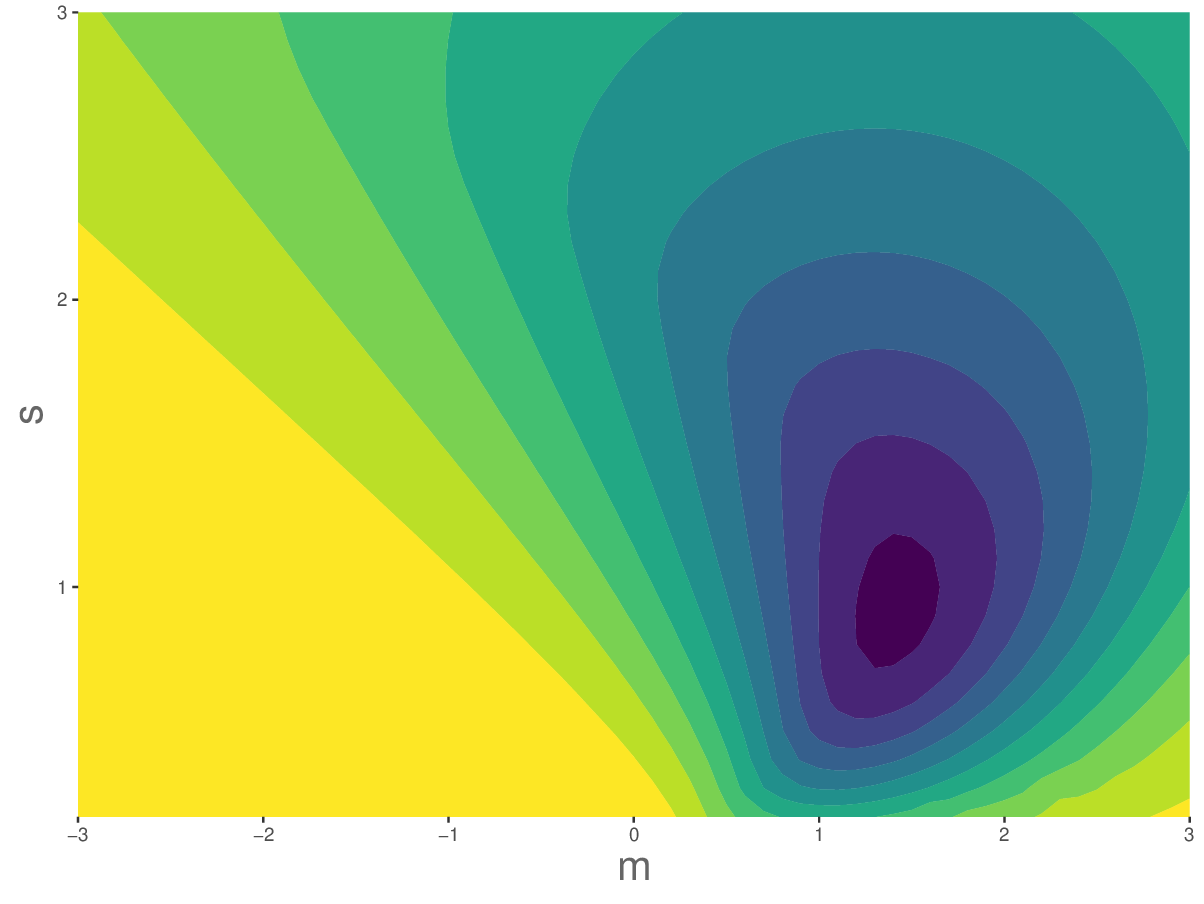}
        \subcaption{$\mu_6\sim \text{Gamma}(3,2)$}
    \end{minipage}
    
    \caption{Contour plot of the upper bound from Proposition~\ref{prop:upper_bound_MH} as a function of $(m,s)$, for different choices of $\mu$. The $x$ and $y$ axes represent values of $m$ and $s$, respectively. The plots were generated using the R programming language.}
\label{fig:UB}
\end{figure*}

\backmatter

\bmhead{Acknowledgements}

I would like to express my sincere gratitude to my PhD advisor, Prof. Pierre Nyquist (Chalmers \& Gothenburg University), for his mentorship and insightful feedback during the development of this work.

This research was supported by the Swedish e-Science Research Centre (SeRC).

\bibliography{bibliography}

\end{document}